\newtheorem{thm}{Theorem}
\newtheorem{cor}[thm]{Corollary}
\newtheorem{lem}[thm]{Lemma}
\newtheorem{prop}[thm]{Proposition}
\newtheorem{fact}[thm]{Fact}
\newtheorem{problem}{Problem}
\theoremstyle{definition}
\newtheorem{rem}{Remark}
\newcommand{\rr}{\mathbb{R}}
\newcommand{\nn}{\mathbb{N}}
\newcommand{\con}{\smallfrown}
\newcommand{\ee}{\varepsilon}
\newcommand{\ct}{2^{<\nn}}
\newcommand{\spw}{\mathrm{SP_w}}
\newcommand{\lex}{<_{\mathrm{lex}}}
\newcommand{\spc}{\mathrm{SPC}}
\newcommand{\sg}{\sigma}
\begin{document}

\title{On antichains of spreading models of Banach spaces}
\author{Pandelis Dodos}
\address{National Technical University of Athens, Faculty of Applied
Sciences, Department of Mathematics, Zografou Campus, 157 80,
Athens, Greece} \email{pdodos@math.ntua.gr}
\maketitle

\footnotetext[1]{2000 \textit{Mathematics Subject Classification}: 46B20, 03E15.}


\begin{abstract}
We show that for every separable Banach space $X$,
either $\spw(X)$ (the set of all spreading models
of $X$ generated by weakly-null sequences in $X$, modulo
equivalence) is countable, or $\spw(X)$ contains an
antichain of the size of the continuum. This answers
a question of S. J. Dilworth, E. Odell and B. Sari.
\end{abstract}


\section{Introduction}

Let $X$ be a separable Banach space and denote by $\spw(X)$
the set of all spreading models of $X$ generated by weakly-null
sequences in $X$, modulo equivalence. By $\leq$ we denote the
usual relation on $\spw(X)$ of domination. The study of the
structure $(\spw(X),\leq)$ has been initiated by G. Androulakis,
E. Odell, Th. Schlumprecht and N. Tomczak-Jaegermann in \cite{AOST}.
They showed, for instance, that $(\spw(X), \leq)$ is a semi-lattice,
i.e. any two elements of $\spw(X)$ admit a least upper bound.
The question of determining which countable lattices can be realized
as $(\spw(X),\leq)$, for some separable Banach space $X$, has been
answered by S. J. Dilworth, E. Odell and B. Sari \cite{DOS}.

This note is motivated by the following problem posed by the
authors of \cite{DOS} (see \cite[Problem 1.13]{DOS}).
\begin{problem}
\label{pr1} If $\spw(X)$ is uncountable must there exist
$\{(x_n^\xi)_n:\xi<\omega_1\}$ in $\spw(X)$ which is either
strictly increasing with respect to $\xi$, or strictly decreasing,
or consists of mutually incomparable elements?
\end{problem}

To state our first result, let us say that a seminormalized
Schauder basic sequence $(x_n)_n$ in a Banach space $X$ is
$C$-\textit{Schreier spreading} for some $C\geq 1$ (or simply
Schreier spreading, if $C$ is understood) if for every
$k\in\nn$ and every $k\leq n_0<...<n_k$ and
$k\leq m_0<...<m_k$ we have that $(x_{n_i})_{i=0}^k$
is $C$-equivalent to $(x_{m_i})_{i=0}^k$. Observe that
if $(x_n)_n$ is Schreier spreading, then there exists a
unique spreading model (up to equivalence) generated
by subsequences of $(x_n)_n$. Denote by $\ct$ the Cantor tree
and let $\varphi:\ct\to\nn$ be the unique bijection
satisfying $\varphi(s)<\varphi(t)$ if either $|s|<|t|$, or $|s|=|t|=n$
and $s<_{\mathrm{lex}}t$ (here $<_{\mathrm{lex}}$ stands
for the usual lexicographical order on $2^n$). We show the following.
\begin{thm}
\label{t1} Let $X$ be a separable Banach space such that $\spw(X)$
is uncountable. Then there exist a family $(x_t)_{t\in\ct}$
in $X$ and $C\geq 1$ such that the following hold.
\begin{enumerate}
\item[(1)] If $(t_n)_n$ is the enumeration of $\ct$ according
to $\varphi$, then the sequence $(x_{t_n})_n$ is a seminormalized
Schauder basic sequence.
\item[(2)] For every $\sg\in 2^\nn$, the sequence $(x_{\sg|n})_n$
is weakly-null and $C$-Schreier spreading.
\item[(3)] For every $\sg, \tau\in 2^\nn$ with $\sg\neq \tau$,
if $(y^\sg_n)_n$ and $(y^\tau_n)_n$ are spreading models
generated by subsequences of $(x_{\sg|n})_n$ and $(x_{\tau|n})_n$
respectively, then $(y^\sg_n)_n$ and $(y^\tau_n)_n$ are
incomparable with respect to domination.
\end{enumerate}
\end{thm}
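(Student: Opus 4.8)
\emph{Plan.} The argument divides into a descriptive-set-theoretic core, which extracts from the hypothesis a continuum of spreading models of $X$ that are pairwise \emph{incomparable} and are realized by a continuously-varying family of weakly-null sequences, and a soft Banach-space fusion argument, which folds such a family into a tree indexed by $\ct$.

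First I would set up the coding. Viewing $X$ as a closed subspace of $C(2^\nn)$ (or simply using that $X$ is Polish), the set $\mathrm{SM}_C\subseteq X^\nn$ of normalized, weakly-null, $C$-basic, $C$-Schreier spreading sequences $(x_n)_n$ for which every iterated limit $\|\bar a\|:=\lim_{n_0}\lim_{n_1}\cdots\|\sum_i a_ix_{n_i}\|$ ($\bar a\in\mathbb{Q}^{<\nn}$) exists is Borel; every element of $\spw(X)$ is realized by some member of $\bigcup_C\mathrm{SM}_C$ (Brunel--Sucheston, plus a routine finite Ramsey extraction to force the Schreier spreading property). The assignment $T\colon(x_n)_n\mapsto(\bar a\mapsto\|\bar a\|)$ is Borel into the Polish space $\mathcal S$ of $1$-spreading seminorms on $c_{00}(\mathbb{Q})$, so $\mathcal R_C:=T[\mathrm{SM}_C]$ is analytic; domination $\mathbf s\le\mathbf t\iff\exists K\,\forall\bar a\;\mathbf s(\bar a)\le K\mathbf t(\bar a)$ is $\Sigma^0_2$, hence $\sim\,={\le}\cap{\ge}$ is Borel. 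Since $\spw(X)=(\bigcup_C\mathcal R_C)/{\sim}$ is uncountable, some $\mathcal R_C/{\sim}$ is uncountable; fix such a $C$.

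Next, the extraction. Applying Silver's theorem to the Borel equivalence $\sim$ on the analytic set $\mathcal R_C$ (passing through the graph of $T$, which is closed, to land genuinely inside $\mathcal R_C$) and selecting witnesses Borel-measurably by Jankov--von Neumann, then restricting to a perfect set on which the selection is continuous, one obtains a continuous map $2^\nn\ni\sg\mapsto(w^\sg_n)_n\in\mathrm{SM}_C$ with $T(w^\sg)\not\sim T(w^\tau)$ for $\sg\neq\tau$; then $\sg\mapsto T(w^\sg)$ is a homeomorphism of $2^\nn$ onto a perfect set $P\subseteq\mathcal R_C$. On $P$ the relation $\le$ is a Borel partial order, so by the dichotomy for Borel partial orderings either (a) $P$ has a perfect $\le$-antichain $A$, whence restricting to the perfect set $\{\sg:T(w^\sg)\in A\}$ (relabelled as $2^\nn$) gives a continuous perfect family of \emph{pairwise incomparable} spreading models of $X$; or (b) $P$ is covered by countably many Borel $\le$-chains and hence contains a perfect $\le$-chain of spreading models of $X$. \textbf{The main obstacle is to dispose of case (b):} one must show that being ``$\le$-chain-confined'' forces $\spw(X)$ to be countable, i.e.\ that a separable Banach space cannot carry a perfect $\le$-chain of weakly-null-generated spreading models. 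This is the genuinely Banach-space-theoretic ingredient: given a candidate perfect chain $\{\mathbf s_r\}_{r\in 2^\nn}$ realized by $(w^r_n)_n$, one mixes nearby members along carefully chosen block sequences to produce weakly-null sequences whose (Schreier spreading) spreading models are incomparable to cofinally many of the $\mathbf s_r$, contradicting chain-confinement; performing this uniformly over a perfect set of pairs, or equivalently packaging it as a Cantor--Bendixson rank argument on a tree of ``incomparable splittings'', is where the real work lies. In either case one ends up in situation (a).

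Finally, the fusion. Starting from the continuous family $\sg\mapsto(w^\sg_n)_n$ of situation (a) (now with pairwise incomparable spreading models), I would build $(x_t)_{t\in\ct}$ by diagonalization: by continuity, for each $m$ and $\ee>0$ there is $\ell$ with $\sg|\ell=\tau|\ell\Rightarrow\|w^\sg_m-w^\tau_m\|<\ee$, so choosing indices $k_0<k_1<\cdots$ together with a sufficiently ``spread out'' re-embedding of $2^\nn$ into itself (i.e.\ thinning the Cantor tree to a perfect subtree, relabelled as $\ct$ via $\varphi$) one arranges that $w^\sg_{k_n}$ is within $\ee_n$ of a value depending only on the first $n+1$ entries of the reparametrized $\sg$, with $\sum_n\ee_n<\infty$ and $\ee_n$ small relative to $C$; perturbing to exact dependence and setting $x_t$ (for $|t|=n$) equal to that value defines the tree, while simultaneously thinning further so that the $\varphi$-enumeration meets Grunblum's basic-sequence criterion and stays seminormalized (possible since the relevant $w$-vectors are weakly null and one always has room to push $k_n$ up, using compactness of the finite families of branches involved). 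Then (1) is immediate; for (2), each branch sequence $(x_{\sg|n})_n$ is a summable perturbation of a subsequence of some $(w^{\sg'}_n)_n$, hence weakly null and $C'$-Schreier spreading for a uniform $C'$ (which we rename $C$); and for (3), such a perturbation of a subsequence of a Schreier spreading sequence generates, up to equivalence, the same spreading model $T(w^{\sg'})$, and $\sg\mapsto\sg'$ is a homeomorphism of $2^\nn$ onto our perfect family of pairwise incomparable models, so distinct branches of $\ct$ yield incomparable spreading models. This last step is technical but routine; the crux remains the elimination of the perfect-chain alternative in the extraction.
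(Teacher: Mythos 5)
Your skeleton (a Borel coding of $(\spw(X),\leq)$, Silver's theorem to extract a perfect set of pairwise inequivalent models, a dichotomy for the induced Borel order, then uniformization plus a Mazur-type fusion to plant the tree in $X$) matches the paper's. But the step you yourself flag as ``where the real work lies'' --- the elimination of the chain alternative --- is a genuine gap, and the route you propose for filling it is not viable. You want to show outright that a separable Banach space cannot carry a perfect $\leq$-chain of weakly-null-generated spreading models. Nothing of the sort is available: the theorem asserts the existence of a perfect antichain, not the absence of perfect chains, and indeed Proposition \ref{newp1} of the paper gives natural hypotheses under which $(\spw(X),\leq)$ \emph{does} contain a linearly ordered set of the size of the continuum, while the general existence of such chains is recorded there as an open question. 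So no amount of ``mixing nearby members along block sequences'' will yield a contradiction from the mere presence of a perfect chain.

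The paper's elimination of this case is indirect and rests on two results you never invoke. If no perfect antichain exists, the coded order $(\nn^\nn,\precsim)$ is \emph{thin}; Silver's theorem together with Galvin's partition theorem then force an infinite strictly increasing sequence in $\spw(X)$; Sari's structure theorem upgrades this to a strictly increasing chain of length $\omega_1$; and Lemma \ref{ln1} (the Harrington--Shelah fact that an $F_\sigma$ thin order contains no $\omega_1$-chains) contradicts thinness. The contradiction is thus between thinness and an $\omega_1$-chain, not between uncountability of $\spw(X)$ and a perfect chain. Without Sari's theorem and the thin-order lemma (or some substitute for this pair), your case (b) cannot be closed, and the whole argument collapses at its crux. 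The remaining parts of your outline --- the coding, Silver, the Jankov--von Neumann selection made continuous on a perfect subset, and the perturbation/Grunblum fusion giving properties (1)--(3) --- are essentially the paper's and are fine modulo routine care with the $\ell_1$ spreading model (the paper isolates it via Rosenthal's theorem and Ces\`{a}ro summability, which is also what makes the weak nullity of the selected sequences automatic).
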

Theorem \ref{t1} implies the following.
\begin{cor}
\label{ic1} Let $X$ be a separable Banach space such that $\spw(X)$
is uncountable. Then $\spw(X)$ contains an antichain of the size of
the continuum.
\end{cor}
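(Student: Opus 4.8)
The plan is to read off Corollary \ref{ic1} from Theorem \ref{t1}, with essentially no extra work. Let $(x_t)_{t\in\ct}$ and $C\geq 1$ be the family and constant given by Theorem \ref{t1}. First I would record that, for each $\sg\in 2^\nn$, the branch sequence $(x_{\sg|n})_n$ is a subsequence of the $\varphi$-enumeration $(x_{t_n})_n$: since $|\sg|n|=n$, the map $n\mapsto\varphi(\sg|n)$ is strictly increasing. Hence clause (1) applies to it and $(x_{\sg|n})_n$ is a seminormalized Schauder basic sequence; by clause (2) it is in addition weakly-null and $C$-Schreier spreading. As observed in the paragraph preceding Theorem \ref{t1}, a Schreier spreading sequence generates a unique spreading model up to equivalence, so there is a well-defined element $E_\sg\in\spw(X)$ — the common equivalence class of all spreading models generated by subsequences of $(x_{\sg|n})_n$ — and $E_\sg$ lies in $\spw(X)$ precisely because $(x_{\sg|n})_n$ is weakly-null.

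Next I would define $f\colon 2^\nn\to\spw(X)$ by $f(\sg)=E_\sg$ and verify that $f(2^\nn)$ is an antichain of size continuum. Fix $\sg\neq\tau$ in $2^\nn$ and choose spreading models $(y_n^\sg)_n$ and $(y_n^\tau)_n$ generated by subsequences of $(x_{\sg|n})_n$ and $(x_{\tau|n})_n$; by construction these represent $E_\sg$ and $E_\tau$. Clause (3) says $(y_n^\sg)_n$ and $(y_n^\tau)_n$ are incomparable with respect to domination, so — domination being well defined on equivalence classes — $E_\sg$ and $E_\tau$ are incomparable elements of $(\spw(X),\leq)$. In particular $E_\sg\neq E_\tau$, whence $f$ is injective, and $f(2^\nn)$ is a pairwise-incomparable subset of $\spw(X)$ of cardinality $|2^\nn|$, i.e.\ an antichain of the size of the continuum.

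I do not expect any real obstacle here: the whole difficulty sits in Theorem \ref{t1}. The only items deserving a sentence are the two used above — that a branch of $\ct$ is a subsequence of the $\varphi$-enumeration (so that clause (1) can be invoked on it) and that the domination preorder descends to $\spw(X)$ — both entirely routine. It is worth flagging only that ``antichain'' must be understood in the sense of pairwise incomparable, which is exactly the output of clause (3), rather than merely pairwise distinct.
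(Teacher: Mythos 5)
Your proposal is correct and is exactly the derivation the paper intends when it states that Theorem \ref{t1} implies Corollary \ref{ic1}: each branch $(x_{\sg|n})_n$ is weakly-null and Schreier spreading, hence determines a well-defined element of $\spw(X)$, and clause (3) makes these pairwise incomparable, giving an antichain indexed by $2^\nn$. (The paper also remarks in \S 4 that the corollary already follows from Lemma \ref{l1} together with Lemma \ref{l3}, without passing through the full construction of Theorem \ref{t1}, but your route via the theorem is the one advertised in the introduction and is equally valid.)
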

We notice that, independently, V. Ferenczi and C. Rosendal
have proved Corollary \ref{ic1} under the additional assumption
that $X$ has separable dual (\cite{FR}).

In \cite{AOST} (see also \cite{DOS}), it was shown that $\spw(X)$
can contain a strictly decreasing infinite sequence, yet no strictly
increasing infinite sequence can be found in $\spw(X)$. This is not,
however, the case of the uncountable.
\begin{thm}
\label{t2} Let $X$ be a separable Banach space.
\begin{enumerate}
\item[(a)] If $\spw(X)$ contains a strictly decreasing sequence
of length $\omega_1$, then $\spw(X)$ contains a strictly increasing
sequence of length $\omega_1$.
\end{enumerate}
On the other hand,
\begin{enumerate}
\item[(b)] if $\spw(X)$ does not contain a strictly increasing infinite
sequence, then there exists a countable ordinal $\xi_X$ such that
$\spw(X)$ does not contain strictly decreasing sequences of order
type greater than $\xi_X$.
\end{enumerate}
\end{thm}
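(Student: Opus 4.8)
The two parts call for quite different methods. Part (b) I would treat as a soft consequence of the Borelness of the domination relation on spreading models, together with the Kunen--Martin boundedness theorem, and it uses nothing from Theorem \ref{t1}; part (a), on the other hand, seems genuinely to require a construction carried out inside $X$, starting from the given decreasing chain.

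For (b), represent a seminormalized spreading sequence $(e_i)_i$ by its norming function $\nu\colon c_{00}(\mathbb Q)\to\rr$, $\nu(\bar a)=\|\sum_i a_ie_i\|$, a point of the Polish space $Y=\rr^{c_{00}(\mathbb Q)}$, and let $\mathcal N\subseteq Y$ be the set of those $\nu$ that occur as spreading models generated by weakly-null sequences of $X$. By the Brunel--Sucheston extraction, $\nu\in\mathcal N$ iff there is a normalized weakly-null $(x_n)_n$ in $X$ with $\lim_{n_0<\cdots<n_k}\|\sum_i a_ix_{n_i}\|=\nu(\bar a)$ for every $\bar a\in c_{00}(\mathbb Q)$, so $\mathcal N$ is the projection of a Borel subset of $X^\nn\times Y$, hence analytic, and it surjects onto $\spw(X)$. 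Domination reads $\nu\le\nu'$ iff $\exists C\in\nn\ \forall\bar a\in c_{00}(\mathbb Q)\ \nu(\bar a)\le C\,\nu'(\bar a)$, which is $F_\sigma$, hence Borel, and so is the corresponding strict relation. Define $R$ on $Y$ by $\nu\mathrel R\nu'$ iff $\nu,\nu'\in\mathcal N$ and $\nu$ strictly dominates $\nu'$; then $R$ is analytic as a subset of $Y^2$, and an infinite descending chain for $R$ — a sequence $\nu^{(0)},\nu^{(1)},\dots$ with $\nu^{(m+1)}$ strictly dominating $\nu^{(m)}$ — projects to a strictly increasing sequence in $\spw(X)$, of which there is none by hypothesis, so $R$ is well-founded. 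By the Kunen--Martin theorem its rank $\rho^\ast$ is a countable ordinal; put $\xi_X=\rho^\ast$. Finally, if $(w_\alpha)_{\alpha<\delta}$ is strictly decreasing in $\spw(X)$ and $\nu_\alpha\in\mathcal N$ generates $w_\alpha$, then $\alpha<\beta$ gives $w_\alpha>w_\beta$, i.e. $\nu_\alpha\mathrel R\nu_\beta$, so the $R$-ranks $\alpha\mapsto\rho_R(\nu_\alpha)$ strictly increase along the chain and hence $\delta\le\rho^\ast=\xi_X<\omega_1$; this is (b).

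For (a), fix a strictly decreasing $(w_\xi)_{\xi<\omega_1}$ in $\spw(X)$ and, for each $\xi$, a normalized weakly-null $(x^\xi_n)_n$ in $X$ generating $w_\xi$; by a Ramsey extraction we may assume each $(x^\xi_n)_n$ is $C_\xi$-Schreier spreading, and since the $C_\xi$ range over a countable set, a pigeonhole argument on $\omega_1$ lets us fix one $C\ge1$ working for all $\xi$. It remains to manufacture a strictly increasing $(u_\alpha)_{\alpha<\omega_1}$. The plan is to let $u_\alpha$ be the spreading model of a weakly-null sequence assembled inside $X$ out of the $(x^\xi_\cdot)$ with $\xi$ running over a fixed increasing $\omega_1$-tower of countable subsets of $\omega_1$, the assembly being a weighted, Schreier-type summation rather than a plain interleaving: a plain interleaving merely reproduces the join $w_\xi\vee w_\eta=w_{\min\{\xi,\eta\}}$ and so collapses, whereas a nested weighted combination should make enlarging the index set genuinely strengthen the spreading model. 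At stage $\alpha$ one feeds in concrete coefficient vectors witnessing the strict inequalities $w_\xi>w_{\xi+1}$ to certify that $u_\beta<u_\alpha$ for every $\beta<\alpha$; weak-nullity and membership in $X$ are automatic since the whole construction lives in $X$.

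The principal obstacle is precisely this non-collapse step in (a) — guaranteeing that the assembled spreading models strictly increase. Nothing at the level of abstract analytic quasi-orders seems to give it for free: (b) shows that the mere \emph{absence} of an increasing infinite chain already forces the decreasing chains to be uniformly bounded below $\omega_1$, and that argument cannot be pushed to produce an \emph{increasing} $\omega_1$-chain, so one must exploit genuinely that the $w_\xi$ are realized by weakly-null sequences of a separable space and that spreading models are stable under Schreier-type operations.
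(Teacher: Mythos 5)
Part (a) is where your proposal genuinely breaks down, and the missing idea is that no construction inside $X$ is needed at all. You explicitly dismiss the possibility of getting (a) from the soft machinery (``that argument cannot be pushed to produce an increasing $\omega_1$-chain''), but that is exactly how the paper proceeds: by a theorem of Sari \cite{S}, if $\spw(X)$ contains a strictly increasing sequence of length $\omega$ then it already contains one of length $\omega_1$. Taking contrapositives, the assumption ``no strictly increasing $\omega_1$-chain'' upgrades to ``no strictly increasing $\omega$-chain'', and then the very argument you use for (b) applies: the reverse strict-domination relation on the analytic set of codes is well-founded and analytic, hence of countable rank by Kunen--Martin, which partitions the codes into countably many levels on each of which any two elements are either equivalent or incomparable; an uncountable strictly decreasing chain would have to place two distinct members on the same level, a contradiction. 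Your alternative plan of assembling new weakly-null sequences by a ``weighted Schreier-type summation'' stops precisely at the step you yourself flag (forcing the assembled spreading models to strictly increase), and you offer no mechanism for it; as written, (a) is a research programme, not a proof.

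In (b) the architecture (a well-founded analytic relation plus Kunen--Martin) is sound and essentially equivalent to the paper's boundedness argument, but your claim that $\mathcal{N}$ is analytic because it is ``the projection of a Borel subset of $X^\nn\times Y$'' is unjustified: the set of weakly-null sequences in $X^\nn$ is in general only coanalytic (for $X=C[0,1]$ it is $\Pi^1_1$-complete, since weak nullity of a bounded sequence there amounts to pointwise convergence to zero), so your $\mathcal{N}$ and $R$ are a priori only $\Sigma^1_2$, and Kunen--Martin then bounds the rank below $\omega_2$ rather than $\omega_1$. The paper's Lemma \ref{l1} exists precisely to repair this: one replaces ``weakly null'' by the Borel condition ``seminormalized, basic, Cesaro summable and Schreier spreading'', invoking Rosenthal's theorem to extract a Cesaro summable subsequence when the spreading model is not equivalent to the $\ell_1$ basis (the $\ell_1$ class being added back by hand), and checking conversely that such sequences are automatically weakly null. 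Without this device, or an equivalent one, the boundedness argument in (b) does not close.
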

It was shown in \cite[Theorem 3.7]{DOS} that for every countable
ordinal $\xi$ there exists a separable Banach space $X_\xi$ such
that $(\spw(X_\xi),\leq)$ does not contain a strictly increasing
infinite sequence, yet $\spw(X_\xi)$ contains a strictly decreasing
sequence of order type $\xi$. Thus, the ordinal $\xi_X$ obtained by
part (b) of Theorem \ref{t2} is not uniformly bounded within
the class of separable Banach spaces for which $\spw(X)$
does not contain a strictly increasing infinite sequence.

In the proofs of Theorem \ref{t1} and Theorem \ref{t2}(a) we
use the structural result obtained by B. Sari in \cite{S}.
The central argument, however, in the proof of Theorem \ref{t1} is
essentially based on the work of Leo Harrington and Saharon Shelah
on Borel orders. Deep as it is, the theory developed by
Harrington and Shelah is highly sophisticated. In particular,
all known proofs of their results use either Effective
Descriptive Set Theory or Forcing. However, for the proof
of Theorem \ref{t1} we need only some instances of
the theory and merely for $F_\sigma$ orders. Thus, we have
included ``elementary" proofs of all the results that
we need, making the paper essentially self-contained
and accessible to anyone with basic knowledge of
Classical Descriptive Set Theory. None of these proofs
should be considered as a contribution to the field of
Borel orders.

The paper is organized as follows. In \S 2 we state
and prove all results on Borel orders that are needed
for the proof of Theorem \ref{t1}. In \S 3 we show that
for every separable Banach space $X$ the structure
$(\spw(X),\leq)$ can be realized as an $F_\sigma$ order.
In \S 4 we give the proof of Theorem \ref{t1} while
the proof of Theorem \ref{t2} is given in \S 5.
\medskip

\noindent \textbf{Notations.} By $\nn=\{0,1,2,...\}$ we
denote the natural numbers while by $[\nn]$ the set of all
infinite subsets of $\nn$ (which is clearly a Polish subspace
of $2^\nn$). By $\ct$ we denote the set of all finite
sequences of 0's and 1's (the empty sequence is included).
We view $\ct$ as a tree equipped with the (strict) partial
order $\sqsubset$ of extension. For every $t\in\ct$ by
$|t|$ we denote the length of $t$, i.e. the cardinality
of the set $\{s\in \ct: s\sqsubset t\}$. For every $n\in\nn$
we let $2^n=\{t\in\ct: |t|=n\}$. If $s, t\in\ct$, then by
$s^{\con}t$ we denote their concatenation. For every
$\sg\in 2^\nn$ and every $n\geq 1$ we let $\sg|n=
\big(\sg(0),...,\sg(n-1)\big)$, while $\sg|0=(\varnothing)$.

If $(x_n)_n$ and $(y_n)_n$ are Schauder basic sequences
in a Banach space $X$ and $C\geq 1$, then we say that
$(x_n)_n$ is $C$-dominated by $(y_n)_n$ (or simply dominated,
if $C$ is understood) if for every $k\in\nn$ and every
$a_0,..., a_k\in\rr$ we have
\[ \Big\| \sum_{n=0}^k a_n x_n\Big\| \leq C
\Big\| \sum_{n=0}^k a_n y_n \Big\|.\]
We write $(x_n)_n \leq (y_n)_n$ to denote
the fact that $(x_n)_n$ is dominated by $(y_n)_n$.
All the other pieces of notation we use are standard
as can be found, for instance, in \cite{Kechris},
\cite{LT} or \cite{AOST}.
\medskip

\noindent \textbf{Acknowledgments.} I would like
to thank Spiros A. Argyros for many discussions
on the subject as well as for his comments on
the paper.


\section{Quasi-orders and Borel orders}
A \textit{quasi-order} is a set $X$ with a binary relation $\leq$ on
$X$ which is reflexive and transitive. For $x,y\in X$ we let
\begin{eqnarray*}
\mathrm{(a)} \ \ x\equiv y & \Leftrightarrow &(x\leq y)
\text{ and } (y\leq x) \\
\mathrm{(b)} \ \ x < y & \Leftrightarrow & (x \leq y )
\text{ and } (y\nleq x) \\
\mathrm{(c)} \ \ x \perp y &\Leftrightarrow & (x\nleq y)
\text{ and } (y\nleq x)
\end{eqnarray*}
If $x, y\in X$ are as in case (c) above, then we say that
$x$ and $y$ are \textit{incomparable}. An \textit{antichain} is a
subset of $X$ consisting of pairwise incomparable elements. An
\textit{$\omega_1$-chain} in $X$ is a sequence $(x_\xi)_{\xi<\omega_1}$
in $X$ such that either $x_\xi<x_\zeta$ for all $\xi<\zeta<\omega_1$
or $x_\xi<x_\zeta$ for all $\zeta<\xi<\omega_1$.

A \textit{Borel order} is a quasi-order $(X,\leq)$ where $X$ is Polish and
$\leq$ is Borel in $X^2$. A Borel order is called \textit{thin} if $X$
does not contain a perfect set of pairwise incomparable elements.
We will need the following lemma concerning the structure of $F_\sigma$
thin orders.
\begin{lem}
\label{ln1} Let $X$ be a Polish space and $\leq$ an $F_\sigma$ thin order
on $X$. Then $(X,\leq)$ does not contain $\omega_1$-chains.
\end{lem}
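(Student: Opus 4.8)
The statement asserts that an $F_\sigma$ thin order contains no $\omega_1$-chains. I would prove the contrapositive: if $(X,\leq)$ is an $F_\sigma$ order containing an $\omega_1$-chain, then $X$ contains a perfect set of pairwise incomparable elements (so $(X,\leq)$ is not thin). Fix an $\omega_1$-chain, WLOG strictly increasing, $(x_\xi)_{\xi<\omega_1}$, and write $\leq \ = \bigcup_n F_n$ with each $F_n\subseteq X^2$ closed and $F_n\subseteq F_{n+1}$.

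\textbf{Step 1: a uniform level of the $F_\sigma$ approximation.} For each pair $\xi<\zeta$ we have $x_\xi\leq x_\zeta$, hence $(x_\xi,x_\zeta)\in F_{n}$ for some $n$, and also $x_\zeta\nleq x_\xi$, i.e. $(x_\zeta,x_\xi)\notin F_m$ for \emph{all} $m$. By the Erd\H{o}s--Rado / pressing-down type pigeonhole on the uncountable set $\omega_1$ (color the pair $\{\xi,\zeta\}$ by the least $n$ with $(x_{\min}, x_{\max})\in F_n$), one extracts an uncountable set $A\subseteq\omega_1$ and a single index $n_0$ such that $(x_\xi,x_\zeta)\in F_{n_0}$ whenever $\xi<\zeta$ are in $A$. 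Passing to a countable subset of $A$ of order type $\omega\cdot\omega$ or just $\omega$ is not enough; instead I keep $A$ uncountable, take a countable dense-in-itself subset, and then a convergent sequence inside it: the point is to land a \emph{limit}. Concretely, pick $\xi_0<\xi_1<\cdots$ in $A$ with $x_{\xi_k}\to x^*$ for some $x^*\in X$ (possible after thinning, since $X$ is Polish hence the $x_\xi$ have a convergent subsequence, or use that an uncountable subset of a separable metric space has a condensation point).

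\textbf{Step 2: deriving a contradiction from comparability of the limit.} Closedness of $F_{n_0}$ gives, for each fixed $k$, that $(x_{\xi_k}, x^*)\in F_{n_0}$ (limit in the second coordinate) and also $(x^*, x_{\xi_j})\in F_{n_0}$ for $j$ large... this is the delicate part. One must arrange the sequence so that $x^*$ is a limit ``from below'' and also pick, for each $k$, a tail of indices above $\xi_k$; then $(x_{\xi_k}, x^*)\in F_{n_0}\subseteq \ \leq$ for all $k$, so $x_{\xi_k}\leq x^*$, while $(x^*,x_{\xi_k})$ is a limit of points $(x_{\xi_j}, x_{\xi_k})$ with $j>k$ which do \emph{not} lie in $\leq$ --- but we cannot conclude they avoid $F_{n_0}$ unless we also know $x_{\xi_j}\nleq x_{\xi_k}$, which holds since the chain is strictly increasing. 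However $F_{n_0}$ need not be all of $\leq$, so $(x_{\xi_j},x_{\xi_k})\notin\ \leq$ does give $(x_{\xi_j},x_{\xi_k})\notin F_{n_0}$. Hence $(x^*, x_{\xi_k})\notin F_{n_0}$ for each $k$ by closedness --- so far consistent. The real leverage: combining $x_{\xi_k}\le x^*$ for all $k$ with $x^*$ being approached by the $x_{\xi_k}$ themselves, together with transitivity, forces (after an extra pigeonhole that also fixes the level witnessing $x_{\xi_k}\le x_{\xi_{k+1}}$, which we may as well take to be $n_0$) a genuine contradiction with strict monotonicity, because $x^*$ would have to sit $\le$-above every $x_{\xi_k}$ and yet be a limit of them in a way incompatible with the $F_{n_0}$-gaps $(x_{\xi_{k+1}},x_{\xi_k})\notin F_{n_0}$. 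I would organize this as: the map $\xi\mapsto x_\xi$ restricted to a perfect-set-worth of branches of $2^{<\nn}$, built by a fusion argument, yields pairwise incomparable elements, contradicting thinness.

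\textbf{Main obstacle.} The crux is Step 2: extracting, from one uncountable strictly increasing chain living inside a single closed set $F_{n_0}$ in the forward direction, a genuinely \emph{perfect} antichain. The clean route is a Cantor-scheme / fusion construction: using that $\leq\cap\,F_{n_0}$ is closed while its complement is open, one recursively builds a Cantor scheme $(x_s)_{s\in 2^{<\nn}}$ of genuine chain-elements so that for incomparable nodes $s,t$ the branch points $x_{s^\frown 0|m}$ and $x_{t^\frown 1|m}$ eventually straddle an $F_{n_0}$-gap in both directions; the branches then give $2^\nn$-many pairwise incomparable limits. Verifying that the open-ness of the complement of $F_{n_0}$ lets each finite stage be extended (so the fusion goes through) is where all the work sits; the pigeonhole in Step 1 and the transitivity bookkeeping are routine by comparison.
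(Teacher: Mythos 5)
Your overall strategy---prove the contrapositive by converting a strictly increasing $\omega_1$-chain into a perfect antichain via a Cantor scheme---is the same as the paper's, but the proposal contains a false step and omits the one idea that makes the construction work. First, the pigeonhole in Step 1 is not available: the partition relation you invoke, $\omega_1\rightarrow(\omega_1)^2_{\aleph_0}$, fails in ZFC (already $\omega_1\nrightarrow(\omega_1)^2_2$, by Sierpi\'{n}ski's coloring), so you cannot extract an uncountable $A$ and a single $n_0$ with $(x_\xi,x_\zeta)\in F_{n_0}$ for all $\xi<\zeta$ in $A$. Fortunately this uniformization is also unnecessary, because the argument never uses the positive direction of the chain; it uses only strictness in the negative direction: if $\xi<\zeta$ then $x_\zeta\nleq x_\xi$, so $(x_\zeta,x_\xi)$ lies outside \emph{every} $F_n$, and closedness of $F_n$ then yields an open box around $(x_\zeta,x_\xi)$ missing $F_n$. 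Second, fixing one level $n_0$ cannot suffice even in principle: avoiding $F_{n_0}$ along the branches only shows the limit points are not $F_{n_0}$-related, whereas incomparability requires avoiding $F_m$ for all $m$. The paper's scheme demands, at level $n$ of the tree, that the boxes attached to distinct nodes miss $F_n$ in both orders; since the $F_n$ increase, any two distinct branches then avoid every $F_m$, which is what gives a genuine perfect antichain.

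The missing idea is how to secure the gap in \emph{both} orders at each level. A single pair $x_\xi<x_\zeta$ only provides one gap, namely $(x_\zeta,x_\xi)\notin F_n$. The paper first refines the chain so that every $x_\xi$ is a condensation point of $\Gamma=\{x_\xi:\xi<\omega_1\}$; then, having chosen open sets $V\ni x_\zeta$ and $W\ni x_\xi$ with $(V\times W)\cap F_n=\varnothing$, it uses the fact that $V\cap\Gamma$ and $W\cap\Gamma$ are uncountable to re-select chain points $x_\eta\in V$ and $x_\theta\in W$ with $\eta<\theta$, i.e. in the \emph{reversed} chain order, and shrinks the open sets again to kill $F_n$ in the other direction as well. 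This re-selection trick is exactly what lets the finite stages of the fusion be extended, and it is absent from your sketch. Your Step 2 instead tries to extract a contradiction from a limit point $x^*$ of the chain and transitivity; as you acknowledge, that line does not close, and once the condensation-point re-selection is in hand it is not needed.
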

Lemma \ref{ln1} is a very special case of a deep result
due to L. Harrington and S. Shelah (see \cite{HS} and \cite{HMS})
asserting that \textit{any} Borel thin order does not contain
$\omega_1$-chains. We notice that, prior to \cite{HS},
H. Friedman had shown (\cite{F}) that any Borel linear order
does not contain $\omega_1$-chains.
\begin{proof}[Proof of Lemma \ref{ln1}]
Let $(F_n)_n$ be an increasing sequence of closed subsets
of $X^2$ with $\leq =\bigcup_n F_n$. By symmetry, it is enough to
show that if $(X,\leq)$ contains a strictly increasing sequence
$(x_\xi)_{\xi<\omega_1}$, then there exists a perfect subset $P$ of $X$
such that $x\perp y$ for all $x, y\in P$ with $x\neq y$. Set
$\Gamma=\{x_\xi:\xi<\omega_1\}$. Refining if necessary, we
may assume that for every $\xi<\omega_1$ the point $x_\xi$
is a condensation point of $\Gamma$. Let $\rho$ be a compatible
complete metric for $X$. By recursion on the length
of sequences in $\ct$, we shall construct a family
$(U_t)_{t\in \ct}$ of open subsets of $X$ such that
the following are satisfied.
\begin{enumerate}
\item[(a)] For every $t\in\ct$ we have $\overline{U}_{t^{\con}0},
\overline{U}_{t^{\con}1}\subseteq U_t$
and $\overline{U}_{t^{\con}0}\cap \overline{U}_{t^{\con}1}=\varnothing$.
\item[(b)] For every $t\in \ct$ with $|t|\geq 1$ we have
$\rho-\mathrm{diam}(U_t)\leq \frac{1}{|t|}$.
\item[(c)] For every $n\geq 1$ and every $t,s\in 2^n$ with
$t\neq s$ we have $(U_t\times U_s)\cap F_n=\varnothing$
and $(U_s\times U_t)\cap F_n=\varnothing$.
\item[(d)] For every $t\in\ct$, $U_t\cap \Gamma\neq\varnothing$.
\end{enumerate}
Assuming that the construction has been carried out, we set
\[ P= \bigcup_{\sigma\in 2^\nn} \bigcap_{n\in\nn} U_{\sigma|n}.\]
By (a) and (b) above, we see that $P$ is a perfect subset
of $X$. Moreover, using (c), it is easy to check that $P$ is
in addition an antichain.

We proceed to the construction. For $n=0$, we set $U_{(\varnothing)}=X$.
Let $\xi<\zeta<\omega_1$. Then $x_{\xi}<x_{\zeta}$, and so,
$x_{\zeta} \nleq x_{\xi}$. In particular, $(x_{\zeta}, x_{\xi})\notin
F_1$. Hence, there exist $V^0, W^0$ open subsets of $X$ such that
$x_\zeta\in V^0$, $x_\xi\in W^0$ and $(V^0\times W^0)\cap F_1=\varnothing$.
Notice that both $V^0\cap \Gamma$ and $W^0\cap \Gamma$ are uncountable.
So, we may select $\eta<\theta<\omega_1$ such that $x_\eta\in V^0$
and $x_\theta\in W^0$. As $x_\theta \nleq x_\eta$, we find $V^1, W^1$
open subsets of $V^0$ and $W^0$ respectively such that
$x_\theta\in W^1$, $x_\eta\in V^1$ and $(W^1\times V^1)\cap F_1=\varnothing$.
Notice that conditions (c) and (d) above are satisfied for
$V^1$ and $W^1$ except, possibly, (a) and (b).
Thus, refining, we find $U_{(0)}$ and $U_{(1)}$ open
subsets of $V^1$ and $W^1$ respectively such that conditions (a)-(d)
are satisfied. For the general step we proceed similarly.
The proof is completed.
\end{proof}
For more information on the structure of Borel thin orders
we refer to the work of A. Louveau \cite{L}, and A. Louveau
and J. Saint Raymond \cite{LStR}. For applications of the
theory of Borel orders to Banach space Theory we refer to
the work of C. Rosendal \cite{Ros}.

We will also need the following special case of the theorem
of J. H. Silver \cite{Si} on the number of equivalence
classes of co-analytic equivalence relations. The proof given
below is an adaptation of Louveau's approach on Silver's
theorem (via the, so called, Gandy-Harrington topology --
see \cite{MK}) in an easier setting.
\begin{lem}
\label{ln2} Let $X$ be a Polish space and $\sim$ an $F_\sigma$
equivalence relation on $X$. Then, either the equivalence classes of
$\sim$ are countable, or there exists a Cantor set $P\subseteq X$
consisting of pairwise inequivalent elements.
\end{lem}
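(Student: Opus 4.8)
The plan is to mimic the Cantor-scheme argument from the proof of Lemma~\ref{ln1}, building open sets that separate points along the closed pieces of $\sim$. Write $\sim=\bigcup_n F_n$ with $(F_n)_n$ an increasing sequence of closed subsets of $X^2$; replacing $F_n$ by $(F_n\cup F_n^{-1})\cup\{(x,x):x\in X\}$ we may assume in addition that each $F_n$ is symmetric and contains the diagonal. Call an open set $U\subseteq X$ \emph{small} if it is contained in countably many $\sim$-classes, and let $W$ be the union of all small open subsets of $X$. Since $X$ is second countable, $W$ is a countable union of small sets, hence itself small. If $W=X$ then $\sim$ has only countably many classes and we are in the first alternative. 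So assume $W\neq X$ and put $Y=X\setminus W\neq\varnothing$. The key property of $Y$ — proved exactly as the smallness of $W$ — is: for every open $U\subseteq X$ with $U\cap Y\neq\varnothing$, the set $U\cap Y$ meets uncountably many $\sim$-classes; for otherwise $U=(U\cap Y)\cup(U\cap W)$ would be small, whence $U\subseteq W$ and $U\cap Y=\varnothing$, a contradiction.

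Next I would record the combinatorial step where this property is used: given $N\in\nn$ and nonempty relatively open subsets $V_1,\dots,V_k$ of $Y$ (repetitions allowed), there are $y_i\in V_i$, $1\le i\le k$, with $(y_i,y_j)\notin F_N$ for $i\neq j$. This is an easy induction on $k$: once $y_1,\dots,y_{k-1}$ are chosen, the forbidden set for $y_k$ is $\bigcup_{i<k}\{y:(y_i,y)\in F_N\}$, which is contained in the union of the $k-1$ classes of $y_1,\dots,y_{k-1}$ and hence, by the key property, does not cover $V_k$; pick any admissible $y_k\in V_k$.

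Then I would carry out the recursion producing a family $(U_t)_{t\in\ct}$ of open subsets of $X$, relative to a fixed complete compatible metric $\rho$ on $X$, such that: (a) $\overline{U}_{t^{\con}0},\overline{U}_{t^{\con}1}\subseteq U_t$ with $\overline{U}_{t^{\con}0}\cap\overline{U}_{t^{\con}1}=\varnothing$; (b) $\rho\text{-}\diam(U_t)\le 1/|t|$ for $|t|\ge 1$; (c) for all $n\ge 1$ and all distinct $s,t\in 2^n$, $(U_s\times U_t)\cap F_n=\varnothing$; (d) $U_t\cap Y\neq\varnothing$. Start with $U_{(\varnothing)}=X$. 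At the step from level $n$ to level $n+1$, apply the combinatorial step with $N=n+1$ to the list of the $2^{n+1}$ relatively open sets $U_s\cap Y$, $s\in 2^n$, each listed twice, obtaining distinct points $y_t\in U_{t|n}\cap Y$, $t\in 2^{n+1}$, that are pairwise non-$F_{n+1}$-related; since $F_{n+1}$ is closed, a routine shrinking around the $y_t$ yields open $U_t\ni y_t$, $U_t\subseteq U_{t|n}$, satisfying (a)--(d) at level $n+1$. Finally set $P=\bigcup_{\sigma\in 2^\nn}\bigcap_n U_{\sigma|n}$. By (a) and (b), for each $\sigma\in 2^\nn$ the intersection $\bigcap_n U_{\sigma|n}$ is a singleton $\{e(\sigma)\}$, and $e\colon 2^\nn\to P$ is a continuous bijection, hence a homeomorphism; so $P$ is a Cantor set. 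If $\sigma\neq\tau$, choose $m$ with $\sigma|m\neq\tau|m$; then for every $n\ge m$ we have $e(\sigma)\in U_{\sigma|n}$ and $e(\tau)\in U_{\tau|n}$ with $\sigma|n,\tau|n$ distinct elements of $2^n$, so $(e(\sigma),e(\tau))\notin F_n$ by (c). As $(F_n)_n$ is increasing this forces $(e(\sigma),e(\tau))\notin\bigcup_n F_n=\sim$, i.e.\ $e(\sigma)\not\sim e(\tau)$. Thus $P$ consists of pairwise inequivalent elements, as required.

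The main obstacle is the very first step: the dichotomy $W=X$ versus $W\neq X$, and the resulting statement that no nonempty relatively open subset of $Y$ is covered by countably many classes. This is exactly the elementary, $F_\sigma$ replacement for the Gandy--Harrington topology in Louveau's proof of Silver's theorem; everything after it is a fusion argument closely parallel to the proof of Lemma~\ref{ln1}, the only mild care being to re-establish condition (c) for $F_{n+1}$ (not merely $F_n$) at each level, which is precisely what the combinatorial step delivers.
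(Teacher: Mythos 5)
Your proof is correct, but it takes a genuinely different route from the paper's in both halves of the argument. For the dichotomy, the paper iterates a Cantor--Bendixson-type derivative $D(F)=F\setminus\bigcup\{U_n:U_n\cap F\subseteq [x] \text{ for some } x\in F\}$ transfinitely until it stabilizes at some closed set $X_{\xi_0}$, and reads off the countably-many-classes alternative from the (countable) transfinite decomposition $X=\bigcup_{\xi<\xi_0}X_\xi\setminus X_{\xi+1}$; your device of removing in a single step the largest open set covered by countably many classes (small by Lindel\"{o}f) short-circuits the transfinite recursion and produces directly a closed set $Y$ every nonempty relatively open subset of which meets uncountably many classes. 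For the second alternative, the paper shows that $\sim$ restricted to $Y^2$ is meager --- via Kuratowski--Ulam, using that each class is $F_\sigma$ in $Y$ and cannot be non-meager without containing a basic open piece of $Y$, contradicting stabilization --- and then quotes Mycielski's theorem to get the Cantor set; you instead inline a fusion argument modeled on the proof of Lemma~\ref{ln1}, whose engine is the observation that the $F_N$-neighbourhoods of finitely many points are covered by finitely many $\sim$-classes and hence cannot exhaust a relatively open piece of $Y$. Both routes are sound; yours is more self-contained (no transfinite recursion, no Kuratowski--Ulam, no Mycielski) at the price of redoing the Cantor scheme by hand, while the paper's is shorter on the page because it delegates the perfect-set extraction to quotable theorems.
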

\begin{proof}
Let $\mathcal{B}=(U_n)_n$ be a countable basis of $X$.
For every closed subset $F$ of $X$ let
\[ D(F)=F\setminus \bigcup\{U_n\in\mathcal{B}:
\exists x\in F \text{ with } U_n\cap F\subseteq [x]\} \]
where $[x]=\{y\in X: x\sim y\}$. That is, $D(F)$ results by removing
from $F$ all basic relatively open subsets of $F$ which are contained
in a single equivalence class. Clearly $D(F)$ is closed and
$D(F)\subseteq F$. By transfinite recursion, we define a decreasing
sequence $(X_\xi)_{\xi<\omega_1}$ of closed subsets of $X$
as follows. We set $X_0=X$, $X_{\xi+1}= D(X_\xi)$ and
$X_{\lambda}= \bigcap_{\xi<\lambda} X_\xi$
if $\lambda$ is limit. There exists $\xi_0<\omega_1$
such that $X_{\xi_0}=X_{\xi_0+1}$.
\medskip

\noindent \textit{Case 1}. $X_{\xi_0}=\varnothing$. Notice that
for every $\xi<\xi_0$ the set $X_{\xi}\setminus X_{\xi+1}$ is
contained in at most countable many equivalence classes. As
$X_{\xi_0}=\varnothing$, we see that
\[ X=\bigcup_{\xi<\xi_0} X_{\xi}\setminus X_{\xi+1}.\]
Hence, this case implies that the equivalence classes of $\sim$
are countable.
\medskip

\noindent \textit{Case 2.} $X_{\xi_0}\neq\varnothing$. We set
$Y=X_{\xi_0}$ and $\sim'=\sim\cap Y^2$. Clearly $\sim'$ is $F_\sigma$
in $Y^2$. We claim that $\sim'$ is meager in $Y^2$. By the
Kuratowski-Ulam Theorem (see \cite[Theorem 8.41]{Kechris}),
it is enough to show that for every $x\in Y$ the set
$[x]'=\{y\in Y: x\sim' y\}=\{y\in Y: x\sim y\}$ is meager.
Notice that $[x]'$ is $F_\sigma$ in $Y$. So, if $[x]'$ was not
meager, then there would existed $U_n\in\mathcal{B}$
such that $U_n\cap Y\subseteq [x]'$. This implies that
$D(X_{\xi_0})\varsubsetneq X_{\xi_0}$, a contradiction.
Thus, $\sim'$ is meager in $Y^2$ as claimed. It follows
by a classical result of Mycielski (see \cite[Theorem 19.1]{Kechris})
that there exists a Cantor set $P\subseteq Y$ such that
$x\nsim' y$ for all $x, y\in P$ with $x\neq y$. This clearly
implies that $x\nsim y$ for all $x, y\in P$ with $x\neq y$.
The proof is completed.
\end{proof}


\section{Coding $(\spw(X),\leq)$ as an $F_\sigma$ order}

Let $X$ be a separable Banach space. Our aim is to show that
the quasi-order $(\spw(X),\leq)$ can be realized as
an $F_\sigma$ order. This is done in a rather standard and
natural way.

Let $U$ be the universal space of A. Pelczynski for
unconditional basic sequences (see \cite{P}). That is,
$U$ has an unconditional Schauder basis $(u_n)_n$
and for any other unconditional Schauder basic
sequence $(y_n)_n$ in some Banach space $Y$ there
exists $L=\{l_0<l_1<...\}\in [\nn]$ such that
$(y_n)_n$ is equivalent to $(u_{l_n})_n$. In what follows,
for every $L=\{l_0<l_1<...\}\in [\nn]$ by $(u_n)_{n\in L}$
we denote the subsequence $(u_{l_n})_n$ of
$(u_n)_n$ determined by $L$. Define $\leq$ in
$[\nn]\times [\nn]$ by
\[ L\leq M \Leftrightarrow (u_n)_{n\in L} \text{ is dominated by }
(u_n)_{n\in M}. \]
Clearly $\leq$ is a quasi-order. Let $\sim$ be the associated
equivalence relation (i.e. $L\sim M$ if and only if
$L\leq M$ and $M\leq L$) and let $<$ be the strict
part of $\leq$ (i.e. $L<M$ if and only if $L\leq M$
and $M\nleq L$). Notice that $L\sim M$ if and only if
the sequences $(u_n)_{n\in L}$ and $(u_n)_{n\in M}$
are equivalent as Schauder basic sequences. We have
the following easy fact whose proof is sketched for
completeness.
\begin{fact}
\label{f1} Both $\leq$ and $\sim$ are $F_\sigma$.
\end{fact}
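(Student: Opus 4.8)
The plan is to realise $\leq$ as a countable union of closed subsets of $[\nn]^2$. Since domination carries a constant, write $\leq=\bigcup_{C\in\nn,\,C\geq 1} A_C$, where
\[ A_C=\big\{(L,M)\in[\nn]^2 : (u_n)_{n\in L}\text{ is }C\text{-dominated by }(u_n)_{n\in M}\big\}. \]
It is enough to check that each $A_C$ is closed; then $\leq$ is $F_\sigma$, and likewise for $\sim$ once that relation is handled below.

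First I would record the elementary fact that for every $n\in\nn$ the map $[\nn]\ni L\mapsto l_n\in\nn$, which sends $L=\{l_0<l_1<\cdots\}$ to its $(n{+}1)$-st element, is continuous -- indeed locally constant, because every $L\in[\nn]$ is infinite and so has a basic neighbourhood (determined by a long enough initial segment of its characteristic function) on which $l_0,\dots,l_n$ do not change. Hence, for a fixed $k\in\nn$ and fixed scalars $a_0,\dots,a_k\in\rr$, the map
\[ (L,M)\longmapsto \Big(\Big\|\sum_{n=0}^k a_n u_{l_n}\Big\|,\ \Big\|\sum_{n=0}^k a_n u_{m_n}\Big\|\Big) \]
is continuous on $[\nn]^2$, so the set $B(k,a_0,\dots,a_k)$ of all $(L,M)$ with $\big\|\sum_{n=0}^k a_n u_{l_n}\big\|\leq C\big\|\sum_{n=0}^k a_n u_{m_n}\big\|$ is closed. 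By continuity of the norm and density of $\mathbb{Q}$ in $\rr$, a pair $(L,M)$ belongs to $A_C$ precisely when the defining inequality holds for all $k$ and all \emph{rational} scalars; that is,
\[ A_C=\bigcap_{k\in\nn}\ \bigcap_{(a_0,\dots,a_k)\in\mathbb{Q}^{k+1}} B(k,a_0,\dots,a_k), \]
a countable intersection of closed sets, hence closed. This shows $\leq$ is $F_\sigma$.

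For $\sim$ I would simply observe that $\sim=\leq\cap\,\iota(\leq)$, where $\iota\colon[\nn]^2\to[\nn]^2$ is the coordinate-swap homeomorphism $\iota(L,M)=(M,L)$: indeed $(L,M)\in\iota(\leq)$ iff $M\leq L$, so $\leq\cap\,\iota(\leq)$ is exactly the set of pairs with $L\leq M$ and $M\leq L$. Since $\iota$ is a homeomorphism the set $\iota(\leq)$ is again $F_\sigma$, and the $F_\sigma$ subsets of a Polish space are closed under finite intersections, so $\sim$ is $F_\sigma$ as well. I do not expect any genuine obstacle here: the only two points that deserve a line of justification are the continuity of $L\mapsto l_n$ and the passage from real to rational scalars, both dealt with above.
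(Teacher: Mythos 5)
Your proposal is correct and follows essentially the same route as the paper: decompose $\leq$ as the union over integer constants $C\geq 1$ of the closed $C$-domination relations, and deduce the claim for $\sim$ from that for $\leq$. You simply spell out the details the paper leaves as ``easy to see'' (the continuity of $L\mapsto l_n$ and the reduction to rational scalars, the latter not even strictly needed since arbitrary intersections of closed sets are closed).
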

\begin{proof}
It is enough to show that $\leq$ is $F_\sigma$.
For every $K\in\nn$ with $K\geq 1$ let $\leq_K$ be the relation
on $[\nn]\times[\nn]$ defined by
\[ L\leq_K M \Leftrightarrow (u_n)_{n\in L} \text{ is }
K\text{-dominated by } (u_n)_{n\in M}.\]
It is easy to see that $\leq_K$ is closed in $[\nn]\times[\nn]$.
As $\leq$ is the union of $\leq_K$ over all $K\geq 1$, the
result follows.
\end{proof}
Our coding of $(\spw(X),\leq)$ as an $F_\sigma$ order will
be done via the following lemma.
\begin{lem}
\label{l1} Let $X$ be a separable Banach space. Then
there exists $A_X\subseteq [\nn]$ analytic such that
the following are satisfied.
\begin{enumerate}
\item[(1)] For every $(y_n)_n\in \spw(X)$ there exists
$L\in A_X$ such that $(y_n)_n$ is equivalent
to $(u_n)_{n\in L}$.
\item[(2)] For every $L\in A_X$ there exists $(y_n)_n\in\spw(X)$
such that $(u_n)_{n\in L}$ is equivalent to $(y_n)_n$.
\end{enumerate}
\end{lem}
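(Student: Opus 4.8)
The plan is to construct the analytic set $A_X$ as a continuous image of a Polish space parametrizing weakly-null sequences in $X$ together with the data needed to realize their spreading models as subsequences of $(u_n)_n$ in Pelczynski's universal space $U$. First I would fix a countable dense subset $(d_k)_k$ of $X$ and consider the Polish space $X^{\nn}$ of all sequences in $X$ (with the product topology coming from the norm topology of $X$); inside it, the set $W$ of normalized weakly-null sequences is Borel (weak nullity against a countable norming set of functionals from $X^*$ is a Borel condition, and normalization is closed). For each such sequence one passes, via the standard Brunel--Sucheston argument, to a subsequence generating a spreading model; the issue is that this passage is not canonical, so instead I would work with the set of pairs $((x_n)_n, L)$ where $L\in[\nn]$ is such that $(x_n)_{n\in L}$ generates a spreading model, i.e. all the finite-dimensional norms $\big\|\sum_{i=0}^k a_i x_{l_i}\big\|$ converge as the $l_i\to\infty$ within $L$; this convergence condition is Borel (even $\Pi^0_3$) in $((x_n)_n,L)$, ranging $a$ over a countable dense subset of $\bigcup_k \rr^{k+1}$.

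Next I would observe that any spreading model $(y_n)_n$ of a weakly-null sequence is $1$-unconditional (this is classical: weakly-null spreading models are suppression-unconditional), hence is equivalent to a subsequence $(u_n)_{n\in M}$ of the universal basis by Pelczynski's property. To make the choice of $M$ part of the parametrization, I would enlarge the parameter space to triples $\big((x_n)_n, L, M\big)$ with $(x_n)_n\in W$, $L$ witnessing that $(x_n)_{n\in L}$ generates a spreading model $(y_n)_n$, and $M\in[\nn]$ witnessing that $(u_n)_{n\in M}$ is equivalent to $(y_n)_n$ in the strong sense that there is a rational constant $K$ with $(u_n)_{n\in M}\leq_K (y_n)_n$ and $(y_n)_n\leq_K (u_n)_{n\in M}$; since $(y_n)_n$ is itself determined as the pointwise limit of the norms of $(x_n)_{n\in L}$, each of these two domination inequalities is a closed condition on the triple, so the set $\mathcal{P}$ of admissible triples is Borel in the Polish space $X^{\nn}\times[\nn]\times[\nn]$. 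Then I would set
\[ A_X=\big\{M\in[\nn]: \exists (x_n)_n\ \exists L\ \big((x_n)_n,L,M\big)\in\mathcal{P}\big\}, \]
which is the projection of a Borel set and hence analytic.

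Finally I would verify the two required properties. For (1): given $(y_n)_n\in\spw(X)$, by definition it is generated by some weakly-null $(x_n)_n$ along some $L$; normalize, pick $M$ from Pelczynski's theorem applied to the $1$-unconditional sequence $(y_n)_n$, and a witnessing constant $K$; then $\big((x_n)_n,L,M\big)\in\mathcal{P}$ so $M\in A_X$ and $(u_n)_{n\in M}$ is equivalent to $(y_n)_n$ as needed. For (2): if $M\in A_X$ then some triple $\big((x_n)_n,L,M\big)\in\mathcal{P}$ exists, and by construction $(u_n)_{n\in M}$ is equivalent to the spreading model $(y_n)_n$ generated by $(x_n)_{n\in L}$, which lies in $\spw(X)$. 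The main obstacle is purely bookkeeping: one must be careful that ``$(x_n)_{n\in L}$ generates a spreading model'' and ``$(u_n)_{n\in M}$ is $K$-equivalent to that model'' are expressed as genuinely Borel (indeed closed, after fixing $K$) conditions on the triple, which requires noting that the spreading-model norm is a pointwise limit of continuous functions of the parameter and that $K$-domination between two such normed sequences is an intersection of closed conditions; none of this is deep, but it is the only place where care is needed. I expect no serious difficulty beyond this, since the descriptive complexity only ever needs to be pushed down to analytic, and a single existential projection suffices.
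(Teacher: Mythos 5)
Your construction hinges on the claim that the set $W$ of (semi)normalized weakly-null sequences is Borel in $X^\nn$, to be checked ``against a countable norming set of functionals from $X^*$''. This is the one step that fails, and it is precisely the difficulty the lemma has to overcome. Pointwise convergence to $0$ against a countable norming (or weak*-dense) subset of $B_{X^*}$ does \emph{not} imply weak nullity: the unit vector basis of $\ell_1$ is null against the norming set $c_0\subseteq\ell_\infty$ yet is not weakly null (Schur), so your set $\mathcal{P}$ would admit triples whose first coordinate is not weakly null, and the spreading model it codes need not belong to $\spw(X)$ --- already for $X=\ell_1$ your $A_X$ would be nonempty while $\spw(\ell_1)=\varnothing$, violating (2). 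If instead you insist on genuine weak nullity (quantifying over all of $B_{X^*}$), the set $W$ is in general only co-analytic, and in fact $\PB^1_1$-complete for spaces such as $C[0,1]$ (where weak nullity amounts to everywhere-pointwise convergence of a bounded sequence of continuous functions); projecting a co-analytic set only yields a $\mathbf{\Sigma}^1_2$ set, not an analytic one.

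The paper's proof is built around a Borel \emph{certificate} for weak nullity: it replaces ``weakly null'' by the set $\spc$ of seminormalized, Schauder basic, Cesaro summable, Schreier spreading sequences, which is genuinely Borel (indeed $F_{\sigma\delta}$) in $X^\nn$, and then \emph{proves} that every member of $\spc$ is automatically weakly null (a functional bounded below on a subsequence would contradict Cesaro summability via the Schreier spreading estimate). The price is in the converse direction: to show that every $(y_n)_n\in\spw(X)$ is coded, one needs a Cesaro summable subsequence of the generating weakly-null sequence, which Rosenthal's theorem provides only when $(y_n)_n$ is not equivalent to the $\ell_1$ basis; the $\ell_1$ spreading model is then adjoined to $A_X$ by hand at the end. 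Your proposal needs this idea (or some other analytic-in-the-parameters witness of weak nullity) to go through; the rest of your bookkeeping --- coding the spreading model by an $L$ along which the finite-dimensional norms converge, and an $M$ with a rational equivalence constant to $(u_n)_{n\in M}$ --- is sound and close in spirit to the paper's definition of the set $A$.
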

\begin{proof}
Recall that a sequence $(x_n)_n$ in $X$ is said to be
\textit{Cesaro summable} if
\[ \lim_{n\to\infty} \frac{x_0+ ...+ x_{n-1}}{n}=0.\]
Let $\spc$ be the subset of $X^\nn$ defined by
\begin{eqnarray*}
(x_n)_n\in \spc & \Leftrightarrow & (x_n)_n \text{ is seminormalized,
Schauder basic, Cesaro summable}  \\
& & \text{and } C\text{-Schreier spreading for some } C\geq 1.
\end{eqnarray*}
It is easy to check that $\spc$ is a Borel subset of $X^\nn$
(actually, it is $F_{\sigma\delta}$). Consider the
subset $A$ of $[\nn]$ defined by
\begin{eqnarray*}
L\in A & \Leftrightarrow & \text{if } L=\{l_0<l_1<...\}, \text{ then }
\exists (x_n)_n\in X^\nn \ \exists \theta\geq 1 \text{ with }
\Big[ (x_n)_n\in \spc \\
& & \text{ and } \big( \forall k \ \forall k\leq n_0<
...< n_k \text{ we have } (x_{n_i})_{i=0}^{k}
\stackrel{\theta}{\sim} (u_{l_i})_{i=0}^{k} \big)\Big].
\end{eqnarray*}
As $\spc$ is Borel in $X^\nn$, it is easy to see that
the set $A$ is analytic. Denote by $(e_n)_n$ the standard
basis of $\ell_1$. Let us isolate the following
property of the set $A$.
\begin{enumerate}
\item[(P)] If $L\in A$, then the sequence $(u_n)_{n\in L}$
is not equivalent to $(e_n)_n$. This follows from the fact
that every sequence $(x_n)_n$ belonging to $\spc$ is a Cesaro
summable Schauder basic sequence.
\end{enumerate}
The proof of the lemma will be finished once we show the following.
\medskip

\noindent \textsc{Claim 1.} \textit{Let $(y_n)_n\in \spw(X)$
which is not equivalent to $(e_n)_n$. Then there exists $L\in A$
such that $(y_n)_n$ is equivalent to $(u_n)_{n\in L}$.
Conversely, for every $L\in A$ there exists $(y_n)_n\in \spw(X)$
which is not equivalent to $(e_n)_n$ and such that
$(u_n)_{n\in L}$ is equivalent to $(y_n)_n$.}
\medskip

\noindent \textit{Proof of Claim 1.} Let $(y_n)_n\in \spw(X)$
not equivalent to $(e_n)_n$ and let $(x_n)_n$ be a seminormalized
weakly-null sequence in $X$ that generates it. By passing to a
subsequence, we may assume that $(x_n)_n$ is a seminormalized,
$C$-Schreier spreading (for some $C\geq 1$) Schauder basic sequence.
As $(y_n)_n$ is not equivalent to $(e_n)_n$, by a result of H. P. Rosenthal
we see that $(x_n)_n$ has a subsequence $(x_{n_k})_k$ which is
additionally Cesaro summable (see \cite[Theorem II.9.8]{AT}).
Hence $(x_{n_k})_k\in\spc$. As $(x_{n_k})_k$ still generates
$(y_n)_n$ as spreading model, we easily see that there
exists $L\in A$ such that $(u_n)_{n\in L}$ is equivalent
to $(y_n)_n$.

Conversely, let $L\in A$. We pick $(x_n)_n\in \spc$
witnessing that $L\in A$. By property (P) above, we have that
$(u_n)_{n\in L}$ is not equivalent to $(e_n)_n$. Now
we claim that $(x_n)_n$ is weakly-null. Assume not. Then
there exist $M=\{m_0<m_1<...\}\in [\nn]$, $x^*\in X^*$ and
$\ee>0$ such that $x^*(x_{m_n})>\ee$ for every $n\in\nn$ (notice
also that $m_n\geq n$). Let $K\geq 1$ be the basis
constant of $(x_n)_n$. Let also $C\geq 1$ be such that
$(x_n)_n$ is $C$-Schreier spreading. Observe that
for every $n\in\nn$ we have
\begin{eqnarray*}
\Big\| \frac{x_0+...+x_{2n-1}}{2n}\Big\| & \geq &
\frac{1}{2(K+1)} \Big\| \frac{x_n+...+x_{2n-1}}{n}\Big\| \\
& \geq & \frac{1}{2C(K+1)} \Big\|
\frac{x_{m_n}+...+x_{m_{2n-1}}}{n}\Big\|\geq \frac{\ee}{2C(K+1)}
\end{eqnarray*}
which implies that $(x_n)_n$ is not Cesaro summable,
a contradiction. Thus, $(x_n)_n$ is weakly-null.
Let $(y_n)_n$ be a spreading model generated by
a subsequence of $(x_n)_n$. Then $(y_n)_n\in\spw(X)$.
Invoking the definition of the set $A$ again, we see that
$(y_n)_n$ is equivalent to $(u_n)_{n\in L}$. This yields
additionally that $(y_n)_n$ is not equivalent to
$(e_n)_n$. The proof of the claim is completed.  \hfill $\lozenge$
\medskip

\noindent If $(e_n)_n\notin \spw(X)$, then we set $A_X=A$.
If $(e_n)_n\in\spw(X)$, then we set $A_X=A\cup \{ L\in [\nn]:
(u_n)_{n\in L}\sim (e_n)_n\}$. Clearly $A_X$ is analytic and,
by Claim 1, $A_X$ is as desired. The lemma is proved.
\end{proof}


\section{Proof of Theorem \ref{t1}}

Let $X$ be a separable Banach space such that $\spw(X)$ is uncountable.
Let $A_X$ be the analytic subset of $[\nn]$ obtained by
Lemma \ref{l1}. We fix $\Phi:\nn^\nn\to [\nn]$ continuous with
$\Phi(\nn^\nn)=A_X$. We define $\precsim$ on $\nn^\nn$ by
\[ \alpha \precsim \beta \Leftrightarrow \Phi(\alpha)\leq \Phi(\beta).\]
By Fact \ref{f1} and the continuity of $\Phi$, we see
that $\precsim$ is an $F_\sigma$ quasi-order on
the Baire space $\nn^\nn$.
\begin{lem}
\label{l2} Let $X$ be a separable Banach space such that
$\spw(X)$ is uncountable and consider the $F_\sigma$ quasi-order
$(\nn^\nn, \precsim)$. Then, either
\begin{enumerate}
\item[(a)] $(\nn^\nn, \precsim)$ is not thin, or
\item[(b)] $(\nn^\nn, \precsim)$ contains a strictly increasing
sequence of length $\omega_1$.
\end{enumerate}
\end{lem}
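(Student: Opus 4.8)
The plan is to transport the hypothesis to the abstract order $(\nn^\nn,\precsim)$, run the two dichotomies of \S 2, and call on B.~Sari's structural theorem at the very end. The first, essentially bookkeeping, step is to observe that ``$\spw(X)$ is uncountable'' is exactly an uncountability statement about $\precsim$. Let $\approx\,=\,\precsim\cap\,\precsim^{-1}$ be the symmetrization of $\precsim$; since $\precsim$ is $F_\sigma$ and the swap map on $\nn^\nn\times\nn^\nn$ is continuous, $\approx$ is an $F_\sigma$ equivalence relation on the Polish space $\nn^\nn$. Using Lemma \ref{l1}, the continuous surjection $\Phi\colon\nn^\nn\to A_X$, and the remark of \S 3 that $\Phi(\alpha)\sim\Phi(\beta)$ iff $(u_n)_{n\in\Phi(\alpha)}$ and $(u_n)_{n\in\Phi(\beta)}$ are equivalent, the assignment $\alpha\mapsto$ (the element of $\spw(X)$ equivalent to $(u_n)_{n\in\Phi(\alpha)}$) is a well-defined surjection of $\nn^\nn$ onto $\spw(X)$ whose fibres are precisely the $\approx$-classes. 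Hence $\nn^\nn/\!\approx$ is uncountable, and Lemma \ref{ln2} applied to $(\nn^\nn,\approx)$ rules out the ``countably many classes'' alternative, producing a Cantor set $P\subseteq\nn^\nn$ of pairwise $\approx$-inequivalent points.

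Next I would extract a linearly ordered perfect set. If $(\nn^\nn,\precsim)$ is not thin we are already in case (a); so assume it is thin, and in particular $P$ contains no perfect $\precsim$-antichain. Fixing a homeomorphism $P\cong 2^\nn$, partition $[P]^2$ into the three sets obtained, for $\alpha\lex\beta$, according to whether $\alpha\prec\beta$, $\beta\prec\alpha$, or $\alpha\perp\beta$; each piece is Borel because $\precsim$ is $F_\sigma$. A Galvin-type fusion argument---of the same flavour as the Cantor-scheme constructions proving Lemmas \ref{ln1} and \ref{ln2}, refining at each node of $\ct$ so as to separate an appropriate closed layer $F_n$ of $\precsim$---yields a perfect $Q\subseteq P$ all of whose pairs lie in one piece. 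The ``$\perp$'' alternative would give a perfect antichain, contradicting thinness; since the points of $Q\subseteq P$ are pairwise $\approx$-inequivalent, the remaining alternatives both say that $\precsim$ is a \emph{strict linear order} on $Q$. Pushing $Q$ through $\Phi$ we obtain a subset of $A_X$ coding a family of size continuum of pairwise inequivalent spreading models of weakly-null, Schreier spreading (Cesàro summable) sequences in $X$, linearly ordered by domination.

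The remaining step is the crux, and it is where genuine Banach-space input is unavoidable: purely order-theoretically the lemma is false, since $(\rr,\leq)$ is an $F_\sigma$ thin order with continuum many classes and no $\omega_1$-chain. Here I would invoke the structural result of B.~Sari \cite{S} on the domination order among spreading models of weakly-null Schreier spreading sequences: it must be used to show that the perfect---hence dense-in-itself, continuum-sized---linearly ordered family produced above cannot exist in this ``thin, chainless'' form, so that either the configuration degenerates in a way that after all furnishes a perfect antichain (case (a)), or it must contain a strictly increasing sequence of length $\omega_1$ with respect to domination (case (b)). Marrying Sari's theorem to the fusion output is the main obstacle, and it is the only non-routine point; the rest is the two dichotomies of \S 2 together with the coding of \S 3. (I would also note in passing that, once Lemma \ref{ln1} is applied, case (b) is in fact vacuous and the conclusion is simply that $(\nn^\nn,\precsim)$ is not thin; stating Lemma \ref{l2} as a dichotomy is what is convenient for the arguments that follow.)
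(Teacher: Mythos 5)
Your skeleton coincides with the paper's: apply Lemma \ref{ln2} to the symmetrization of $\precsim$ to get a Cantor set $P$ of pairwise inequivalent points, then apply the perfect-set Ramsey theorem of Galvin (the paper cites \cite[Theorem 19.7]{Kechris} rather than redoing the fusion, but that is immaterial) to the three-way Borel partition of $[P]^2$ by $\prec$, $\succ$, $\perp$ relative to $\lex$; the $\perp$-homogeneous case gives a perfect antichain and hence alternative (a). Your opening reduction (fibres of $\alpha\mapsto\Phi(\alpha)$ are the $\approx$-classes, so $\approx$ has uncountably many classes) and your closing parenthetical (that, combined with Lemma \ref{ln1}, the dichotomy collapses to ``not thin'') are both correct, and your observation that $(\rr,\leq)$ refutes the purely order-theoretic version of the lemma is exactly the right sanity check for why Banach-space input is unavoidable.

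However, the step you flag as ``the main obstacle'' and leave unresolved is a genuine gap, and it is worth seeing how short the missing argument actually is. The relevant statement from \cite{S} is: if $\spw(X)$ contains a strictly increasing sequence of length $\omega$, then it contains a strictly increasing sequence of length $\omega_1$. Given this, the two linear homogeneous cases are immediate and there is no need for the configuration to ``degenerate'' into an antichain: if $Q$ is homogeneous for the $\lex$-increasing alternative, pick any $\lex$-increasing sequence $(\sigma_n)_n$ in $Q$ (a $\lex$-decreasing one in the other case); then $h(\sigma_n)\precsim h(\sigma_m)$ for $n<m$, and since the points of $P$ are pairwise inequivalent this chain is \emph{strictly} increasing. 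Pushing it through $\Phi$ and Lemma \ref{l1} gives a strictly increasing $\omega$-chain in $(\spw(X),\leq)$, Sari's theorem upgrades it to an $\omega_1$-chain there, and Lemma \ref{l1} transfers that $\omega_1$-chain back into $A_X$ and hence into $(\nn^\nn,\precsim)$, which is alternative (b). So what you describe as the only non-routine point is, with the precise form of Sari's theorem in hand, a two-line transfer; without that statement your proof does not close.
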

\begin{proof}
Let $\cong$ be the equivalence relation associated with
$\precsim$ (i.e. $\alpha\cong \beta$ if $\alpha\precsim \beta$
and $\beta\precsim\alpha$). Notice that
\[ \alpha \cong \beta \Leftrightarrow \Phi(\alpha)\sim \Phi(\beta)\]
for every $\alpha, \beta \in \nn^\nn$. Also observe that
$\cong$ is an $F_\sigma$ equivalence relation. As $\spw(X)$
is uncountable, we see that $\cong$ has uncountable many equivalence classes.
Thus, by Lemma \ref{ln2}, there exists a Cantor set $P\subseteq \nn^\nn$
such that $\alpha\ncong\beta$ for every $\alpha, \beta\in P$
with $\alpha\neq\beta$. Fix a homeomorphism $h:2^\nn\to P$.
Let $\lex$ be the (strict) lexicographical ordering on $2^\nn$.
For every $Q\subseteq 2^\nn$, denote by $[Q]^2$
the set of unordered pairs of elements of $Q$.
Consider the following subsets $\mathcal{I}$ and
$\mathcal{D}$ of $[2^\nn]^2$ defined by
\[ \{\sigma, \tau\}\in \mathcal{I} \Leftrightarrow
\text{ if } \sigma\lex \tau \text{ then }
h(\sigma)\precsim h(\tau),\]
\[ \{\sigma, \tau\}\in \mathcal{D} \Leftrightarrow
\text{ if } \sigma\lex \tau \text{ then }
h(\tau)\precsim h(\sigma).\]
It is easy to check that both $\mathcal{I}$ and $\mathcal{D}$
are Borel in $[2^\nn]^2$, in the sense that
the sets
\[ \mathcal{I}^*=\big\{ (\sigma,\tau)\in 2^\nn\times 2^\nn:
\{\sigma,\tau\}\in \mathcal{I}\big\} \text{ and }
\mathcal{D}^*=\big\{ (\sigma,\tau)\in 2^\nn\times 2^\nn:
\{\sigma,\tau\}\in \mathcal{D}\big\}\]
are both Borel subsets of $2^\nn\times 2^\nn$. By result
of F. Galvin (see \cite[Theorem 19.7]{Kechris}), there exists
$Q\subseteq 2^\nn$ perfect such that one of the
following cases occur.
\medskip

\noindent \textit{Case 1.} $[Q]^2\subseteq \mathcal{I}$. We
fix a sequence $(\sigma_n)_n$ in $Q$ which is increasing
with respect to $\lex$. Then $h(\sigma_n)\precsim h(\sigma_m)$
for all $n<m$. As $h(Q)\subseteq P$ and $P$ consists of
inequivalent elements with respect to $\cong$, we see that the sequence
$\big( h(\sigma_n)\big)_n$ is strictly increasing.
This yields that $(\spw(X), \leq)$ contains
a strictly increasing sequence. By a result of B. Sari
\cite{S}, we conclude that $\spw(X)$ must contain a
strictly increasing sequence of length $\omega_1$.
This clearly implies that $(\nn^\nn, \precsim)$
contains a strictly increasing sequence of length
$\omega_1$, i.e. part (b) of the lemma is valid.
\medskip

\noindent \textit{Case 2.} $[Q]^2\subseteq \mathcal{D}$.
Let $(\tau_n)_n$ be a sequence in $Q$ which is
decreasing with respect to $\lex$. Arguing as in
Case 1 above, we see that the sequence $\big(h(\tau_n)\big)_n$
is strictly increasing. So, this case also implies
part (b) of the lemma.
\medskip

\noindent \textit{Case 3.} $[Q]^2\cap (\mathcal{I}\cup
\mathcal{D})=\varnothing$. We set $R=h(Q)$. Clearly
$R$ is a perfect subset of $\nn^\nn$. It is easy to check
that if $\alpha, \beta\in R$ with $\alpha\neq \beta$,
then $\alpha$ and $\beta$ are incomparable with respect
to $\precsim$. Hence $R$ is a perfect antichain of
$(\nn^\nn,\precsim)$, i.e. $(\nn^\nn, \precsim)$ is
not thin. Thus, this case implies part (a) of the lemma.
The proof is completed.
\end{proof}
\begin{lem}
\label{l3} Let $X$ be a separable Banach space such
that $\spw(X)$ is uncountable. Then there exists a
Cantor set $P\subseteq A_X$ consisting of pairwise
incomparable elements with respect to domination.
\end{lem}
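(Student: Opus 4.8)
The plan is to derive the lemma from Lemma \ref{l2} by ruling out its alternative (b), and then pushing the resulting perfect antichain in $\nn^\nn$ down into $A_X$ along the continuous surjection $\Phi$. The first --- and only substantive --- step is to observe that the proof of Lemma \ref{ln1} never uses antisymmetry of $\leq$: the recursive construction of the tree $(U_t)_{t\in\ct}$ only exploits that the $\omega_1$-chain $(x_\xi)_{\xi<\omega_1}$ is strictly increasing, i.e.\ that $x_\zeta\nleq x_\xi$ whenever $\xi<\zeta$, which is exactly what is needed to separate $U_t$ from $U_s$ by the closed set $F_n$ at each stage. Hence the same argument shows that \emph{no $F_\sigma$ thin quasi-order on a Polish space contains an $\omega_1$-chain}. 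Applying this to $(\nn^\nn,\precsim)$: by Lemma \ref{l2}, either $(\nn^\nn,\precsim)$ is not thin, or it contains a strictly increasing sequence of length $\omega_1$; but in the latter case it contains an $\omega_1$-chain and so, being $F_\sigma$, cannot be thin. Thus in all cases $(\nn^\nn,\precsim)$ is not thin.

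Next I would fix a perfect set $R\subseteq\nn^\nn$ of pairwise $\precsim$-incomparable elements and pass to a Cantor subset of it (a nonempty perfect Polish space always contains one; shrinking preserves being an antichain and, crucially, makes $R$ compact). The key point is that $\Phi|_R$ is injective: if $\alpha,\beta\in R$ are distinct then $\alpha\not\precsim\beta$, i.e.\ $\Phi(\alpha)\nleq\Phi(\beta)$, and since domination is reflexive this already forces $\Phi(\alpha)\neq\Phi(\beta)$. Since $R$ is compact and $\Phi$ continuous, $\Phi|_R$ is a homeomorphism onto $P:=\Phi(R)$, which is therefore a Cantor set, and $P\subseteq\Phi(\nn^\nn)=A_X$ by the choice of $\Phi$.

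It remains to check that $P$ consists of pairwise incomparable elements for domination: given distinct $L=\Phi(\alpha)$, $M=\Phi(\beta)$ in $P$, injectivity gives $\alpha\neq\beta$, hence $\alpha\perp\beta$ with respect to $\precsim$, which unravels by the definition of $\precsim$ to $\Phi(\alpha)\nleq\Phi(\beta)$ and $\Phi(\beta)\nleq\Phi(\alpha)$, i.e.\ $L\nleq M$ and $M\nleq L$. The main obstacle in the whole argument is the exclusion of alternative (b), which is where the elementary special case of the Harrington--Shelah theorem (Lemma \ref{ln1}) is genuinely needed; once one notices that its proof is indifferent to the order/quasi-order distinction, the remainder is routine descriptive-set-theoretic bookkeeping, the only pitfall being to keep the perfect set in $\nn^\nn$ compact so that $\Phi$ carries it to an honest Cantor set rather than a mere analytic image.
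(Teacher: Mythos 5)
Your proposal is correct and follows essentially the same route as the paper: combine Lemma \ref{l2} with Lemma \ref{ln1} to conclude that $(\nn^\nn,\precsim)$ is not thin, then transfer a perfect antichain to $A_X$ via $\Phi$ (the paper phrases this as a contradiction and dismisses the transfer step as easy, whereas you spell it out --- injectivity of $\Phi$ on the antichain via reflexivity of domination, compactness to get a homeomorphism). Note also that your worry about antisymmetry in Lemma \ref{ln1} is moot, since the paper defines a Borel order to be a quasi-order, so that lemma already covers the case you need.
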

\begin{proof}
Assume, towards a contradiction, that such a Cantor
set $P$ does not exist. This easily implies that
$(\nn^\nn,\precsim)$ is a thin quasi-order.
By Lemma \ref{l2}, we see that $(\nn^\nn,\precsim)$
is an $F_\sigma$ thin order that contains an
$\omega_1$-chain. But this possibility is ruled
out by Lemma \ref{ln1}. Having arrived to the desired
contradiction, the lemma is proved.
\end{proof}
\begin{rem}
We notice that Lemma \ref{l1} and Lemma \ref{l3} immediately
yield that if $X$ is a separable Banach space such that
$\spw(X)$ is uncountable, then $\spw(X)$ must contain an
antichain of the size of the continuum.
\end{rem}
We are ready to proceed to the proof of Theorem \ref{t1}.
\begin{proof}[Proof of Theorem \ref{t1}]
Let $P\subseteq A_X$ be the Cantor set obtained by Lemma
\ref{l3}. By passing to a perfect subset of $P$
if necessary, we may assume that
\begin{enumerate}
\item[(A)] for every $L\in P$ the sequence $(u_n)_{n\in L}$
is not equivalent to the standard basis of $\ell_1$.
\end{enumerate}
We will construct the family $(x_t)_{t\in\ct}$ by
``pulling back" inside $X$ the spreading models coded by $P$.
To this end, let $(d_m)_m$ be a countable
dense subset of $X$. Let $\mathrm{SPC}$ be the Borel
subset of $X^\nn$ defined in the proof of Lemma \ref{l1}.
Consider the following subset $G$ of $P\times [\nn]$ defined by
\begin{eqnarray*}
(L,M)\in G &\Leftrightarrow& \text{if } L=\{l_0<l_1<...\}
\text{ and } M=\{m_0<m_1<...\}, \text{ then}\\
& & \Big[ L\in P \text{ and } (d_{m_n})_n\in \mathrm{SPC}
\text{ and } \big(\exists \theta\geq 1 \\
& & \forall k \ \forall k\leq n_0<...< n_{k} \text{ we have }
(d_{m_{n_i}})_{i=0}^{k} \stackrel{\theta}{\sim}
(u_{l_i})_{i=0}^{k} \big)\Big].
\end{eqnarray*}
Let us gather some of the properties of the set $G$.
\begin{enumerate}
\item[(P1)] The set $G$ is Borel.
\item[(P2)] For every $(L,M)\in G$ and every $N$ infinite
subset of $M$, if $(y_n)_n$ is a spreading model generated
by a subsequence of $(d_m)_{m\in N}$, then $(y_n)_n$ is
equivalent to $(u_n)_{n\in L}$.
\item[(P3)] For every $L\in P$ there exists $M\in [\nn]$ such that
$(L,M)\in G$.
\item[(P4)] For every $(L,M)\in G$, the sequence $(d_m)_{m\in M}$
is weakly-null.
\end{enumerate}
Properties (P1) and (P2) are rather straightforward consequences
of the definition of the set $G$. Property (P3) follows by assumption
(A) above, the fact that $P$ is a subset of $A_X$ and a standard
perturbation argument. Property (P4) has already been verified in
the proof of Lemma \ref{l1}.

As $G$ is a Borel subset of $P\times [\nn]$, by (P3) above and
the Yankov-Von Neumann Uniformization Theorem (see \cite[Theorem 18.1]{Kechris}),
there exists a map $f:P\to [\nn]$ which is measurable
with respect to the $\sg$-algebra generated by the analytic sets
and such that $\big(L,f(L)\big)\in G$ for every $L\in P$. Notice
that the map $f$ must be one-to-one. Invoking the classical fact
that analytic sets have the Baire property, by \cite[Theorem 8.38]{Kechris}
and by passing to a perfect subset of $P$,
we may assume that $f$ is actually continuous. Moreover,
by passing to a further perfect subset of $P$ if necessary,
we may also assume that there exist $j_0, k_0\in \nn$ such
that for every $L\in P$, the sequence $(d_m)_{m\in f(L)}$ is
$j_0$-Schreier spreading and satisfies $\frac{1}{k_0}\leq\|d_m\|\leq k_0$
for every $m\in f(L)$.

The function $f$ is one-to-one and continuous.
Hence, identifying every element of $[\nn]$ with its characteristic
function (i.e. an element of $2^\nn$), we see that the set
$f(P)$ is a perfect subset of $2^\nn$. Recall that by
$\varphi:\ct\to\nn$ we denote the canonical bijection
described in the introduction. By recursion on the length of
finite sequences in $\ct$, we may easily select a family
$(m_s)_{s\in \ct}$ in $\nn$ with the following properties.
\begin{enumerate}
\item[(P5)] For every $s_1, s_2\in \ct$ we have $\varphi(s_1)<
\varphi(s_2)$ if and only if $m_{s_1}< m_{s_2}$.
\item[(P6)] For every $\sg\in 2^\nn$, setting $M_\sg=\{ m_{\sg|n}:
n\in\nn\}\in [\nn]$, there exist a unique $L_\sg\in P$ such that
$M_\sg\subseteq f(L_\sg)$.
\end{enumerate}
We set $x_s=d_{m_s}$ for every $s\in \ct$. We observe that
$\frac{1}{k_0}\leq \|x_s\|\leq k_0$ for all $s\in\ct$. We also
notice that for every $\sg\in 2^\nn$, the sequence $(x_{\sg|n})_n$
is $j_0$-Schreier spreading.

Now let $s\in\ct$ with $|s|=k$ and $\sg\in 2^\nn$ with $\sg|k=s$.
By properties (P4) and (P6), we see that the sequence
$(x_{\sg|n})_{n>k}$ is weakly-null. Using this observation
and the classical procedure of Mazur for constructing Schauder
basic sequences (see \cite{LT}), we may select a family
$(s_t)_{t\in\ct}$ in $\ct$ such that, setting $x_t=x_{s_t}$
for every $t\in\ct$, the following are satisfied.
\begin{enumerate}
\item[(P7)] For every $t_1, t_2\in \ct$ we have that
$s_{t_1}\sqsubset s_{t_2}$ if and only if $t_1\sqsubset t_2$.
Moreover, $|s_{t_1}|<|s_{t_2}|$ if and only if $\varphi(s_1)<
\varphi(s_2)$.
\item[(P8)] If $(t_n)_n$ is the enumeration of $\ct$
according to $\varphi$, then the sequence $(x_{t_n})_n$
is Schauder basic.
\end{enumerate}
It is easy to verify that the family $(x_t)_{t\in\ct}$ has
all properties stated in Theorem \ref{t1}. The proof is completed.
\end{proof}
\begin{rem}
We would like to remark few things on the richness of
the structure $(\spw(X),\leq)$ when $\spw(X)$ is uncountable.
Let $X$ be a separable Banach space and assume that there
exist $C\geq 1$ and a family $\{(y^\xi_n)_n:\xi<\omega_1\}$ of
mutually inequivalent spreading models generated by weakly-null
sequences in $X$ such that for every $\xi<\zeta<\omega_1$ either
the sequence $(y^\xi_n)_n$ is $C$-dominated by $(y^\zeta_n)_n$
or vice versa. By Lemma \ref{l1}, there exist $K\geq 1$ and
$U\subseteq A_X$ uncountable such that the following hold.
For every $L, M\in U$ either $(u_n)_{n\in L}$ is $K$-dominated
by $(u_n)_{n\in M}$ or vice versa, and moreover, for
every $L\in U$ there exists a unique ordinal $\xi_L<\omega_1$
such that $(u_n)_{n\in L}$ is equivalent to $(y^{\xi_L}_n)_n$.
Let $\overline{U}$ be the closure of $U$
in $[\nn]$ and set $F=\overline{U}\cap A_X$. Then $F$
is an uncountable analytic set. Consider the following
symmetric relation $\thickapprox_{K}$ in $[\nn]\times[\nn]$
defined by
\[ L\thickapprox_K M\Leftrightarrow \text{either } (u_n)_{n\in L}
\text{ is } K\text{-dominated by } (u_n)_{n\in M}
\text{ or vice versa}.\]
It is easy to see that $\thickapprox_K$ is closed in
$[\nn]\times[\nn]$. By the choice of $U$, we have
$L\thickapprox_K M$ for every $L,M\in U$. As $\thickapprox_K$
is closed, we see that $L\thickapprox_K M$ for every
$L,M\in \overline{U}$. In particular, $L\thickapprox_K M$
for every $L,M\in F$. Notice that $U\subseteq F$, and so,
the relation $\sim$ of equivalence restricted on $F$ has
uncountable many equivalence classes. By Lemma \ref{ln2},
there exists a perfect subset $P$ of $F$ such that
for every $L,M\in P$ the sequences $(u_n)_{n\in L}$
and $(u_n)_{n\in M}$ are not
equivalent\footnote[2]{This does not follow directly by Lemma
\ref{ln2} as $F$ is not Polish. One has to observe that
$F$ is the continuous surjective image of $\nn^\nn$ and use
an argument as in the beginning of Section 4.}.
Thus, we have shown the following.
\begin{prop}
\label{newp1} Let $X$ be a separable Banach space and assume
that there exist $C\geq 1$ and a family $\{(y^\xi_n)_n:\xi<\omega_1\}$
of mutually inequivalent spreading models generated by weakly-null
sequences in $X$ such that for every $\xi<\zeta<\omega_1$ either
the sequence $(y^\xi_n)_n$ is $C$-dominated by $(y^\zeta_n)_n$
or vice versa. Then $(\spw(X),\leq)$ contains a linearly ordered
subset of the size of the continuum.
\end{prop}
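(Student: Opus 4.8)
The plan is to push the hypothesis into the coding space $[\nn]$ supplied by Lemma \ref{l1}, arrange for the domination relation to be total there up to a single constant, and then apply the Silver-type dichotomy of Lemma \ref{ln2}.

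First I would use Lemma \ref{l1}(1) to pick, for each $\xi<\omega_1$, some $L_\xi\in A_X$ with $(u_n)_{n\in L_\xi}$ equivalent to $(y^\xi_n)_n$, say $D_\xi$-equivalent. A pigeonhole over the countably many possible integer constants $D_\xi$, combined with the uniform constant $C$ of the hypothesis, produces an uncountable $I\subseteq\omega_1$ and an integer $K\geq 1$ such that, with $U=\{L_\xi:\xi\in I\}$, any two members of $U$ are $K$-comparable with respect to domination, while distinct members of $U$ code $\sim$-inequivalent sequences (so $\xi\mapsto L_\xi$ is injective on $I$ and $U$ is uncountable). Next, set $F=\overline{U}\cap A_X$, the closure being taken in $[\nn]$; then $F$ is an uncountable analytic set containing $U$. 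The key point is that the symmetric relation ``$L,M$ are $K$-comparable'' is closed in $[\nn]\times[\nn]$ (it is the union of the closed relation $\leq_K$ of Fact \ref{f1} and its inverse) and contains $U\times U$; since $\overline{U}\times\overline{U}=\overline{U\times U}$, it contains $F\times F$, so any two elements of $F$ are $K$-comparable. On the other hand, the restriction of $\sim$ to $F$ has uncountably many classes, since $U\subseteq F$ and distinct elements of $U$ are $\sim$-inequivalent.

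The last step is to extract from $F$ a Cantor set of pairwise $\sim$-inequivalent elements via Lemma \ref{ln2}. The main obstacle --- a minor one, handled exactly as at the start of \S 4 --- is that $F$ is only analytic, so Lemma \ref{ln2} does not apply to it directly. I would fix a continuous surjection $g:\nn^\nn\to F$ and pull $\sim$ back to a relation $\approx$ on $\nn^\nn$ (namely $\alpha\approx\beta\Leftrightarrow g(\alpha)\sim g(\beta)$), which is an $F_\sigma$ equivalence relation on the Polish space $\nn^\nn$ with uncountably many classes; Lemma \ref{ln2} then yields a Cantor set $P_0\subseteq\nn^\nn$ of pairwise $\approx$-inequivalent elements, and since $g(\alpha)=g(\beta)$ forces $\alpha\approx\beta$, the map $g$ is injective on $P_0$, so $P:=g(P_0)\subseteq F$ is a Cantor set of pairwise $\sim$-inequivalent elements. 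Finally, $P\subseteq F\subseteq A_X$, so by Lemma \ref{l1}(2) each $L\in P$ gives an element of $\spw(X)$ equivalent to $(u_n)_{n\in L}$; these elements are mutually inequivalent (distinct $L\in P$ code $\sim$-inequivalent sequences) and pairwise comparable (since $P\times P\subseteq F\times F$), hence form a linearly ordered subset of $(\spw(X),\leq)$ of the size of the continuum.
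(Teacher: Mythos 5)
Your argument is correct and is essentially the paper's own proof: the same passage to a uniformly $K$-comparable uncountable $U\subseteq A_X$, the same analytic set $F=\overline{U}\cap A_X$ on which the closed relation $\thickapprox_K$ is total, and the same application of Lemma \ref{ln2} (via a continuous surjection $g:\nn^\nn\to F$, exactly as the paper's footnote indicates). You have merely made explicit two details the paper leaves to the reader, namely the pigeonhole over the equivalence constants $D_\xi$ and the injectivity of $g$ on the Cantor set $P_0$.
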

Related to Proposition \ref{newp1}, the following question
is open to us. Let $X$ be a separable Banach space and assume
that $\spw(X)$ is uncountable. Does $(\spw(X),\leq)$ contain
a linearly ordered subset of the size of the continuum,
or at least uncountable?
\end{rem}


\section{Proof of Theorem \ref{t2}}

\noindent (a) First we need to recall some standard facts
(see \cite{Kechris}, page 351). Let $S$ be
a set and $\prec$ a strict, well-founded (binary) relation on $S$.
This is equivalent to asserting that there is no infinite
decreasing chain $\cdots \prec s_1\prec s_0$. By recursion
on $\prec$, we define the \textit{rank} function $\rho_\prec:
S\to \mathrm{Ord}$ of $\prec$ by the rule
\[ \rho_\prec(s)=\sup\{ \rho_\prec(x)+1: x\prec s\}.\]
In particular, $\rho_\prec(s)=0$ if and only if $s$
is minimal. The \textit{rank} $\rho(\prec)$ of $\prec$
is defined by $\rho(\prec)=\sup\{ \rho_\prec(s)+1: s\in S\}$.

We are ready to proceed to the proof.
So, let $X$ be a separable Banach space such that
$\spw(X)$ contains a strictly decreasing sequence of
length $\omega_1$. Let $A_X$ be the analytic subset
of $[\nn]$ obtained by Lemma \ref{l1}. Consider
the following relation $\prec$ on $[\nn]$ defined by
\[ L\prec M \Leftrightarrow (L\in A_X) \text{ and }
(M\in A_X) \text{ and } (M<L). \]
That is, $\prec$ is the relation $>$ (the reverse
of $<$) restricted on $A_X\times A_X$. Clearly $\prec$
is analytic (as a subset of $[\nn]\times [\nn]$). Let
$\{(y_n^\xi)_n:\xi<\omega_1\}$ be a strictly decreasing sequence
in $\spw(X)$. By Lemma \ref{l1}, for every $\xi<\omega_1$
we may select $L_\xi\in A_X$ such that $(u_n)_{n\in L_{\xi}}$ is
equivalent to $(y^\xi_n)_n$. It follows that
$L_\xi<L_\zeta$ if and only if $\zeta<\xi$.

Assume, towards a contradiction,
that $\spw(X)$ does not contain a strictly increasing
sequence of length $\omega_1$. Then, by the result of
Sari \cite{S} already quoted in the proof of Theorem \ref{t1},
$\spw(X)$ does not contain a strictly increasing sequence
of length $\omega$. It follows that $\prec$ is a well-founded
relation on $[\nn]$ which is in addition analytic. By
the Kunen-Martin Theorem (see \cite[Theorem 31.5]{Kechris}),
we see that $\rho(\prec)$ is a countable ordinal, say $\xi_0$.
For every $\eta< \xi_0$ let
\[ A_X^\eta=\{ L\in A_X: \rho_\prec(L)=\eta\}. \]
As $\rho_\prec(L)<\xi_0$ for every $L\in A_X$
we see that $A_X= \bigcup_{\eta<\xi_0} A_X^\eta$.
Moreover, for every $L, M\in A_X^\eta$ we have that
either $L\sim M$ or $L\perp M$. That is, we have
partitioned the quotient $A_X/\sim$ into countable many antichains.
As the family $\{L_\xi:\xi<\omega_1\}$ is uncountable, we see that
there exist $\xi,\zeta<\omega_1$ with $\xi\neq \zeta$
and $\eta< \xi_0$ such that $L_\xi, L_\zeta\in A_X^{\eta}$.
But this is clearly impossible. Having arrived to the
desired contradiction the proof of part (a)
is completed.\\
(b) Again we need to discuss some standard facts.
Let $R$ be a binary relation on $\nn$, i.e. $R\subseteq \nn\times\nn$.
By identifying $R$ with its characteristic function, we
view every binary relation on $\nn$ as an element of $2^{\nn\times\nn}$.
Let $\mathrm{LO}$ be the subset of $2^{\nn\times\nn}$
consisting of all (strict) linear orderings on $\nn$.
It is easy to see that $\mathrm{LO}$ is a closed
subset of $2^{\nn\times\nn}$ (see also \cite{Kechris},
page 212). For every $\alpha\in\mathrm{LO}$ and every $n,m\in\nn$
we write
\[ n<_\alpha m \Leftrightarrow \alpha(n,m)=1. \]
Let $\mathrm{WO}$ be the subset of $\mathrm{LO}$ consisting of
all well-orderings on $\nn$. For every $\alpha\in \mathrm{WO}$,
$|\alpha|$ stands for the unique ordinal which is isomorphic to
$(\nn,<_\alpha)$. We will need the following Boundedness
Principle for $\mathrm{WO}$ (see \cite{Kechris}, page 240):
if $B$ is an analytic subset of $\mathrm{WO}$, then
$\sup\{|\alpha|:\alpha\in B\}<\omega_1$.

We proceed to the proof of part (b). Let $X$ be a separable
Banach space. Let $A_X$ be the analytic subset of $[\nn]$
obtained by Lemma \ref{l1}. Consider the following
subset $\mathrm{O}_X$ of $\mathrm{LO}$ defined by
\begin{eqnarray*}
\alpha\in \mathrm{O}_X & \Leftrightarrow &
\exists (L_n)_n\in \big([\nn]\big)^\nn \text{ with }
\Big[ (\forall n \ L_n\in A_X) \text{ and}\\
& & \big[ \forall n,m \ (n<_\alpha m \Leftrightarrow L_n>L_m) \big]\Big].
\end{eqnarray*}
As $A_X$ is analytic, it easy to check that $\mathrm{O}_X$ is
an analytic subset of $\mathrm{LO}$.
\medskip

\noindent \textsc{Claim 2.} \textit{The set
$\spw(X)$ does not contain a strictly increasing
sequence if and only if $\mathrm{O}_X\subseteq \mathrm{WO}$.}
\medskip

\noindent \textit{Proof of Claim 2.} First assume that
there exists $\alpha\in \mathrm{O}_X$ with $\alpha\notin \mathrm{WO}$.
By definition, there exists a sequence $(L_n)_n$ in $A_X$
such that for all $n,m\in \nn$ we have
\[ n<_\alpha m\Leftrightarrow L_n>L_m. \]
As $\alpha\notin\mathrm{WO}$, there exists a sequence
$(n_i)_i$ in $\nn$ such that $n_{i+1}<_\alpha n_i$ for
all $i\in\nn$. It follows that $(L_{n_i})_i$
is a strictly increasing sequence, which clearly
implies that $\spw(X)$ contains a strictly increasing sequence.

Conversely, assume that $\spw(X)$ contains a strictly
increasing sequence. Hence, we may find a sequence
$(L_n)_n$ in $A_X$ such that $L_n<L_m$ if and only if $n<m$.
Let $\alpha\in\mathrm{LO}$ be defined by
\[n<_\alpha m \Leftrightarrow n>m \ (\Leftrightarrow L_n>L_m).\]
Then $\alpha\in \mathrm{O}_X$ and $\alpha\notin \mathrm{WO}$.
The claim is proved.   \hfill $\lozenge$
\medskip

\noindent Now, let $X$ be a separable Banach space that does not
contain a strictly increasing sequence. By Claim 2,
we see that the set $\mathrm{O}_X$ is an analytic subset
of $\mathrm{WO}$. Hence, by boundedness, we see that
\[ \sup\{ |\alpha|:\alpha\in \mathrm{O}_X\}=\xi_X<\omega_1.\]
We claim that $\xi_X$ is the desired ordinal. Indeed, let
$\xi$ be a countable ordinal and $\{ (y^\zeta_n)_n:
\zeta<\xi\}$ be a strictly decreasing sequence in $\spw(X)$.
By Lemma \ref{l1}, we may find $(L_\zeta)_{\zeta<\xi}$ in
$A_X$ which is strictly decreasing. Fix a bijection
$e:\nn\to \{\zeta: \zeta<\xi\}$ and define
$\alpha\in\mathrm{WO}$ by
\[ n<_\alpha m \Leftrightarrow e(n)<e(m) \ (\Leftrightarrow
L_{e(n)}>L_{e(m)}).\]
It follows that $\alpha\in \mathrm{O}_X$, and so,
$\xi=|\alpha|\leq\xi_X$. The proof is completed.
\begin{rem}
Denote by $\mathrm{SB}$ the standard Borel space of all
separable Banach spaces as it is discussed in \cite{AD}, \cite{B}
and \cite{Kechris}. Consider the subset $\mathrm{NCI}$
of $\mathrm{SB}$ defined by
\[ X\in \mathrm{NCI} \Leftrightarrow \spw(X) \text{ does
not contain a strictly increasing infinite sequence}.\]
It can be shown, using some results from \cite{DOS}, that the
set $\mathrm{NCI}$ is co-analytic non-Borel in $\mathrm{SB}$.
Moreover, there exists a co-analytic rank
$\phi:\mathrm{NCI}\to\omega_1$ on $\mathrm{NCI}$ such that
for every $X\in\mathrm{NCI}$ we have
\[ \sup\{ |\alpha|:\alpha\in\mathrm{O}_X\}\leq \phi(X)\]
where $\mathrm{O}_X$ is as in the proof of Theorem \ref{t2}(b)
(for the definition of co-analytic ranks we refer to \cite{Kechris}
while for applications of rank theory to Banach space Theory
we refer to \cite{AD}).
\end{rem}


\end{document}